\numberwithin{equation}{section}
\newtheorem{thm}{Theorem}[section]
\newtheorem{pro}[thm]{Proposition}
\newtheorem{lem}[thm]{Lemma}
\newtheorem{cor}[thm]{Corollary}
\newtheorem{rem}[thm]{Remark}
\begin{document}


\title[Embeddings of finite-dimensional compacta in Euclidean spaces]
{Embeddings of finite-dimensional compacta in Euclidean spaces}

\author{S. Bogataya}
\address{Hight School of Economics, Moscow 119992,
Russia}\email{svetbog@mail.ru}

\author{S. Bogatyi}
\address{Faculty of Mechanics and Mathematics, Moscow State University, Vorob'evy
gory, Moscow 119899, Russia} \email{bogatyi@inbox.ru}
\thanks{The second author was supported by Grants NSH 1562.2008.1
and RFFI 09-01-00741-a.}

\author{V.  Valov}
\address{Department of Computer Science and Mathematics, Nipissing University,
100 College Drive, P.O. Box 5002, North Bay, ON, P1B 8L7, Canada}
\email{veskov@nipissingu.ca}
\thanks{The third author was supported by NSERC Grant 261914-08}

\keywords{compact spaces, algebraically independent sets, general
position, dimension, Euclidean spaces}

\subjclass{Primary 54C10; Secondary 54F45}


\begin{abstract}
If $g$ is a map from a space $X$ into $\mathbb R^m$ and $q$ is an
integer, let $B_{q,d,m}(g)$ be the set of all planes
$\Pi^d\subset\mathbb R^m$ such that $|g^{-1}(\Pi^d)|\geq q$. Let
also $\mathcal H(q,d,m,k)$ denote the maps $g\colon X\to\mathbb R^m$
such that $\dim B_{q,d,m}(g)\leq k$. We prove that for any
$n$-dimensional metric compactum $X$ each of the sets $\mathcal
H(3,1,m,3n+1-m)$ and $\mathcal H(2,1,m,2n)$ is dense and $G_\delta$
in the function space $C(X,\mathbb R^m)$ provided $m\geq 2n+1$ (in
this case $\mathcal H(3,1,m,3n+1-m)$ and $\mathcal H(2,1,m,2n)$ can
consist of embeddings). The same is true for the sets $\mathcal
H(1,d,m,n+d(m-d))\subset C(X,\mathbb R^m)$ if $m\geq n+d$, and
$\mathcal H(4,1,3,0)\subset C(X,\mathbb R^3)$ if $\dim X\leq 1$.
This results complements an authors' result from \cite{bv}. A
parametric version of the above theorem, as well as a partial answer
of a question from \cite{b2} and \cite{bv} are also provided.
\end{abstract}

\maketitle

\markboth{}{Embeddings in $\mathbb R^m$}



\section{Introduction}
In this paper we assume that all topological spaces are metrizable
and all single-valued maps are continuous.

Everywhere below by $M_{m,d}$ we denote the space of all affine
$d$-dimensional subspaces $\Pi^d$ (briefly, $d$-planes) of $\mathbb
R^m$. If $g$ is a map from a space $X$ into $\mathbb R^m$ and $q$ is
an integer, let $B_{q,d,m}(g)=\{\Pi^d\in M_{m,d}:|g^{-1}(\Pi^d)|\geq
q\}$. There is a metric topology on $M_{m,d}$, see \cite{dnf}, and
we consider $B_{q,d,m}(g)$ as a subspace of $M_{m,d}$ with this
topology. For a given space $X$ we consider the set $\mathcal
H(q,d,m,k)$ of all maps $g\colon X\to\mathbb R^m$ such that $\dim
B_{q,d,m}(g)\leq k$.

It follows from authors' result \cite[Corollary 1.6 with $T=m=2n+1$
and $t=0$]{bv} that if $X$ is metrizable compactum with $\dim X\leq
n$, then all maps $g\colon X\to\mathbb R^{2n+1}$ such that for every
$\Pi^1\in M_{2n+1,1}$ the preimage $g^{-1}(\Pi^1)$ contains at most
$4$ points form a dense and $G_\delta$-subset of $C(X,\mathbb
R^{2n+1})$ (here $C(X,\mathbb R^m)$ is the space of all maps from
$X$ into $\mathbb R^m$ with the uniform convergence topology). This
result can be complemented as follows:

\begin{thm}
Let $X$ be a metrizable compactum of dimension $\leq n$. Then:
\begin{itemize}
\item[(a)] The set $\mathcal H(3,1,m,3n+1-m)$ is dense and
$G_\delta$ in $C(X,\mathbb R^m)$ provided $m\geq 2n+1$.
\item[(b)] The set $\mathcal H(2,1,m,2n)$ is dense and
$G_\delta$ in $C(X,\mathbb R^m)$ provided $m\geq 2n+1$.
\item[(c)] The set $\mathcal H(1,d,m,n+d(m-d))$ is dense and
$G_\delta$ in $C(X,\mathbb R^m)$ provided $m\geq n+d$.
\item[(d)] The set $\mathcal H(4,1,3,0)$ is dense and
$G_\delta$ in $C(X,\mathbb R^3)$ if $n=1$.
\end{itemize}
\end{thm}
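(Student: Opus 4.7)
The plan is to establish all four statements through the standard Baire-category scheme in $C(X,\mathbb R^m)$: express $\mathcal H(q,d,m,k)$ as a countable intersection of open sets, then prove density via polyhedral approximation combined with a general-position argument.

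For the $G_\delta$ part I would use the characterization $\dim B_{q,d,m}(g)\le k$ iff for every $\varepsilon>0$ the set $B_{q,d,m}(g)$ admits a finite open cover of mesh $<\varepsilon$ and order $\le k+1$. Writing $\mathcal H(q,d,m,k)=\bigcap_{i} U_i$ with $U_i$ the set of maps admitting such a cover at scale $1/i$, the openness of each $U_i$ follows from an approximate upper semicontinuity of $g\mapsto B_{q,d,m}(g)$: if $g_n\to g$ uniformly and $\Pi_n\in B_{q,d,m}(g_n)$, every accumulation point of $(\Pi_n)$ lies close to an element of $B_{q,d,m}(g)$, so a slightly enlarged cover still has order $\le k+1$ and small mesh.

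For density I would reduce to PL maps. Since $\dim X\le n$, every $g\in C(X,\mathbb R^m)$ is uniformly approximable by a composition $h\circ\varphi$, where $\varphi\colon X\to K$ is an $\varepsilon$-map to an $n$-dimensional polyhedron and $h\colon K\to\mathbb R^m$ is PL. It then suffices to approximate $h$ by PL maps $h'$ for which $\dim B_{q,d,m}(h'\circ\varphi)\le k$. For a PL map, the set of $q$-tuples $(x_1,\ldots,x_q)\in K^q$ with distinct images lying on a common $d$-plane is semialgebraic and, in general position, has dimension $qn-(q-d-1)(m-d)$ when $q\ge d+1$, or $n+d(m-d)$ when $q=1$; a direct check shows this value agrees with $k$ in each of (a)--(d). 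Projecting to $M_{m,d}$ and applying a generic PL perturbation then yields the required bound on $\dim B_{q,d,m}(h')$.

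The main obstacle is the general-position step for the higher-multiplicity cases (a) and (d). Parts (b) and (c) reduce to classical chord and N\"obeling-type transversality calculations, but controlling the dimension of the $3$-secant set in (a) or the $4$-secant set in (d) cannot be arranged by affine transversality alone. It requires the algebraic-independence/$(q,1)$-general-position framework developed in \cite{bv}, which produces small PL perturbations realizing the generic multi-point behavior. For (d), \cite{bv} already supplies generic absence of $5$-secants when $\dim X\le 1$ in $\mathbb R^3$, so only the additional fact that $4$-secants form a $0$-dimensional set remains, and this is handled by the same framework. Assembling these density statements with the $G_\delta$ property through the Baire category theorem then concludes the proof.
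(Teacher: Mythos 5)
Your overall architecture (countable intersection of open sets for the $G_\delta$ part, PL approximation plus general position for density) is the same as the paper's, but two steps as you state them do not go through. First, the openness argument: the map $g\mapsto B_{q,d,m}(g)$ is \emph{not} upper semicontinuous for $q\geq 2$, because if $\Pi_n\in B_{q,d,m}(g_n)$ with $g_n\to g$ uniformly, the $q$ preimage points witnessing $\Pi_n$ may coalesce in the limit, so the limit plane need not lie in (or near) $B_{q,d,m}(g)$; consequently your sets $U_i$ are not open, and $B_{q,d,m}(g)$ is not even closed in $M_{m,d}$, so the small-mesh/low-order cover characterization of dimension needs justification as well. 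The paper avoids this by fixing a countable base $\mathcal B$ of closed sets, writing $B_{q,d,m}(g)=\bigcup_\Gamma B_\Gamma(g,m,d)$ over disjoint $q$-tuples $\Gamma\subset\mathcal B$ (which keeps the $q$ witnesses uniformly separated, making each $B_\Gamma(g,m,d)$ closed and $g\mapsto B_\Gamma(g,m,d)$ upper semicontinuous, Proposition 2.1), and then invoking the countable sum theorem. Some device of this kind is indispensable and is missing from your write-up.

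Second, the density step defers exactly the new content to the citation \cite{bv}. The paper states explicitly that Theorem 1.1 \emph{complements} the results of \cite{bv}; the dimension bounds you need are not there. After the Berkowitz--Roy perturbation to algebraically independent vertex coordinates, the paper proves them from scratch: for (a) the bound $\dim\{\Pi^1: \Pi^1\cap\Pi^{n_i}_i\neq\varnothing,\ i=1,2,3\}\leq n_1+n_2+n_3+1-m$ for three pairwise skew planes is obtained via explicit Grassmann manifold computations (Propositions 3.7, 3.9 and Corollary 3.11), and for (d) the $0$-dimensionality (in fact finiteness) of the set of lines meeting four disjoint generic segments in $\mathbb R^3$ rests on a specific geometric argument with doubly ruled hyperboloids of one sheet (Proposition 3.12 and Corollary 3.13): the first three segments determine a hyperboloid $\mathrm H$ whose equation has coefficients polynomial in their endpoints, any common transversal lies in the second ruling of $\mathrm H$, and algebraic independence forces $A_7,A_8$ off $\mathrm H$, so the line $A_7A_8$ meets $\mathrm H$ in at most two points, giving at most two transversals. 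Your heuristic codimension count $qn-(q-d-1)(m-d)$ does predict the right answers, but asserting that the higher-multiplicity cases are ``handled by the same framework'' is precisely the gap the Grassmannian and hyperboloid arguments of Section 3 are there to fill.
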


If $f\colon X\to Y$ is a perfect surjection, we denote by $\mathcal
P(q,d,m,k)$ the set of all maps $g\colon X\to\mathbb R^m$ such that
$\dim B_{q,d,m}(g|f^{-1}(y))\leq k$ for all $y\in Y$, where
$g|f^{-1}(y)$ is the restriction of the map $g$ on $f^{-1}(y)$.

We apply Theorem 1.1 to prove the following its parametric version.

\begin{thm}
Let $f\colon X\to Y$ be a perfect $n$-dimensional map between
metrizable spaces with $\dim Y=0$. Then the following conditions are
satisfied, where $C(X,\mathbb R^m)$ is equipped with the source
limitation topology:
\begin{itemize}
\item[(a)] The set $\mathcal P(3,1,m,3n+1-m)$ is dense and $G_\delta$ in
$C(X,\mathbb R^m)$ provided $m\geq 2n+1$.
\item[(b)] The set $\mathcal P(2,1,m,2n)$ is dense and
$G_\delta$ in $C(X,\mathbb R^m)$ provided $m\geq 2n+1$.
\item[(c)] The set $\mathcal P(1,d,m,n+d(m-d))$ is dense and
$G_\delta$ in $C(X,\mathbb R^m)$ provided $m\geq n+d$.
\item[(d)] The set $\mathcal P(4,1,3,0)$ is dense and
$G_\delta$ in $C(X,\mathbb R^3)$ if $n=1$.
\end{itemize}
\end{thm}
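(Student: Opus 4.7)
The plan is to derive Theorem 1.2 from Theorem 1.1 via a fiberwise approximation argument exploiting $\dim Y=0$. The same scheme applies uniformly to each of (a)--(d), using the corresponding part of Theorem 1.1 as a black box on each fiber $f^{-1}(y)$, which is a compactum of dimension $\le n$.

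For the $G_\delta$ property, I would express $\mathcal P(q,d,m,k)$ as a countable intersection of open sets of $C(X,\mathbb R^m)$ in the source-limitation topology. Fix a countable base $\mathcal B$ of $M_{m,d}$. The condition $\dim B_{q,d,m}(g|f^{-1}(y))\le k$ can be rewritten as a countable conjunction of cover-based statements in $\mathcal B$ of multiplicity $\le k+1$ (using any standard cover characterization of covering dimension). Perfectness of $f$ ensures $y\mapsto f^{-1}(y)$ is upper semicontinuous in the Vietoris topology, so each such condition, imposed uniformly in $y\in Y$, is open in $g$ in the source-limitation topology; a countable intersection yields the $G_\delta$ presentation.

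For density, given $g\in C(X,\mathbb R^m)$ and continuous $\alpha\colon X\to(0,\infty)$, perfectness of $f$ supplies a continuous $\beta\colon Y\to(0,\infty)$ with $3\beta(y)<\min\{\alpha(x):x\in f^{-1}(y)\}$ for every $y$. For each $y\in Y$, Theorem 1.1 applied to the compactum $f^{-1}(y)$ yields a map $h_y\colon f^{-1}(y)\to\mathbb R^m$ with $|h_y(x)-g(x)|<\beta(y)$ for $x\in f^{-1}(y)$ and $h_y\in\mathcal H(q,d,m,k)$. Extend $h_y$ by Tietze to $\tilde h_y\colon X\to\mathbb R^m$, and shrink to a clopen neighborhood $V_y\ni y$ on which $|\tilde h_y-g|<2\beta\circ f$ and, crucially, $\tilde h_y|f^{-1}(y')\in\mathcal H(q,d,m,k)$ for every $y'\in V_y$. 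Since $Y$ is a $0$-dimensional paracompact metrizable space, the open cover $\{V_y\}_{y\in Y}$ admits a pairwise disjoint clopen refinement $\{V_{y_i}\}_{i\in I}$, and
\[
h(x):=\tilde h_{y_i}(x)\quad\text{for }x\in f^{-1}(V_{y_i})
\]
defines a continuous map in $\mathcal P(q,d,m,k)$ within $\alpha$ of $g$.

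The main obstacle is the robustness claim that $V_y$ may be chosen so that $\tilde h_y|f^{-1}(y')\in\mathcal H(q,d,m,k)$ for every $y'\in V_y$. I would handle this via the $G_\delta$ decomposition of $\mathcal H(q,d,m,k)$ supplied by Theorem 1.1 (stated intrinsically in terms of covers of $M_{m,d}$ and hence independent of the domain of the map): for each open set $U_j$ in such a decomposition, openness together with upper semicontinuity of $y'\mapsto f^{-1}(y')$ furnishes a clopen neighborhood $V_y^{(j)}\ni y$ on which the level-$j$ condition persists; a standard diagonal argument over the countable family $\{U_j\}$ produces the required $V_y$. Once this robustness step is in place, the remaining ingredients --- Tietze extension, clopen partitions in a $0$-dimensional paracompactum, and the source-limitation bookkeeping --- are routine.
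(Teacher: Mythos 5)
Your reduction to Theorem 1.1 via the zero-dimensionality of $Y$ is the right instinct, and the clopen-partition gluing at the end is fine, but the step you yourself flag as ``the main obstacle'' --- the robustness claim --- is a genuine gap that your proposed fix does not close. Membership in $\mathcal H(q,d,m,k)$ is only a $G_\delta$ condition: it is the conjunction of countably many open conditions (one for each scale $1/s$ and each disjoint $q$-tuple $\Gamma$ of basic closed sets, as in Corollary 2.2). For a fixed $j$ you can indeed find a clopen $V_y^{(j)}\ni y$ on which the $j$-th condition persists for $\tilde h_y|f^{-1}(y')$, but the intersection $\bigcap_j V_y^{(j)}$ need not be a neighborhood of $y$, and there is no ``standard diagonal argument'' that recovers one: a $G_\delta$ condition simply does not propagate to nearby fibers. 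Concretely, $\dim B_{q,d,m}(\tilde h_y|f^{-1}(y'))$ can exceed $k$ for $y'$ arbitrarily close to $y$ even though every fixed-scale condition holds on some neighborhood. A secondary, related omission: for $q\geq 2$ the set $B_{q,d,m}(g|f^{-1}(y))$ is not closed in $M_{m,d}$ (distinct preimage points can merge in the limit), so even the single cover-based conditions in your $G_\delta$ decomposition are not open in $g$ without an extra device.

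The paper's proof is organized precisely to avoid both problems. It introduces the sets $B^\eta_{q,d,m}(g,y)$ of planes hit by $q$ preimage points that are pairwise $\eta$-separated; these are closed (Lemma 5.1), their union over $\eta=1/s$ recovers $B_{q,d,m}(g|f^{-1}(y))$, and the countable sum theorem reduces everything to the fixed-scale sets $\mathcal P^\eta_Y(q,k,d,\epsilon)$, each of which is a single open condition (Lemma 5.2 and Proposition 5.3, using a continuous selection to build the function bounding the perturbation). Density is then proved scale by scale: $g$ is factored, via simplicial factorization and \cite[Proposition 3.4]{bv2}, through a complex $N$ mapping onto a zero-dimensional complex $L$ by a perfect $PL$-map $p$ with $\dim p\leq n$, with an intermediate $\mathcal U$-map $\alpha$ that separates points at distance $\geq\eta$; the polyhedral general-position statements (Propositions 4.1 and 4.2) are applied to the finite polyhedra $p^{-1}(z)$, and the inclusion $B^\eta_{q,1,m}(h,y)\subset B_{q,1,m}(\varphi_1|p^{-1}(\beta(y)))$ transfers the dimension bound. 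The Baire property of the source limitation topology then yields the theorem. If you want to keep your fiberwise-Tietze outline, you would have to run it at a fixed pair $(\eta,\epsilon)$, i.e.\ prove density of each $\mathcal P^\eta_Y(q,k,d,\epsilon)$ rather than of $\mathcal P(q,d,m,k)$ itself, and even then you would need a substitute for the $\eta$-separation to control the preimage points created in nearby fibers --- which is essentially what the paper's construction provides.
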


For any map $g\in C(X,\mathbb R^m)$ we also consider the set
$C_{q,d,m}(g)$ consisting of points $y=(y_1,...,y_q)\in(\mathbb
R^m)^q$ such that all $y_i$ belongs to a $d$-plane in $\mathbb R^m$
and there exist $q$ different points $x_i\in X$ with $g(x_i)=y_i$,
$i=1,..,q$. The set of all maps $g\in C(X,\mathbb R^m)$ with $\dim
C_{q,d,m}(g)\leq k$ is denoted by $\mathcal Q(q,d,m,k)$.

Theorem 1.3 below follows from the proof of Theorem 1.2 by
considering the sets $C_{q,d,m}(g)$ instead of $B_{q,d,m}(g)$.

\begin{thm}
Let $X, Y$ and $f$ satisfy the hypotheses of Theorem $1.2$. Then all
items of Theorem $1.2$ remain true if the corresponding sets
$\mathcal P(q,d,m,k)$ are replaced by $\mathcal Q(q,d,m,k)$.
\end{thm}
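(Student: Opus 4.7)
The plan is to re-run the proof of Theorem 1.2 with the configuration space $C_{q,d,m}(g|f^{-1}(y)) \subset (\mathbb{R}^m)^q$ substituted for the plane space $B_{q,d,m}(g|f^{-1}(y)) \subset M_{m,d}$ at each step. The starting point is a non-parametric analogue of Theorem 1.1: for every $n$-dimensional metric compactum $Z$ and each triple $(q,d,k)$ appearing in parts (a)--(d), the set $\{h \in C(Z,\mathbb{R}^m) : \dim C_{q,d,m}(h) \leq k\}$ is dense and $G_\delta$ in $C(Z,\mathbb{R}^m)$. This should be extracted from the proof of Theorem 1.1 by rereading its dimension count through the lens of $q$-tuples: the estimate there is really performed on the incidence space consisting of $q$-tuples $(y_1,\ldots,y_q)$ together with a common $d$-plane, and this space projects onto both $B_{q,d,m}(h)$ and $C_{q,d,m}(h)$. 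In the generic position secured by Theorem 1.1, the relevant fibers of these projections are of small bounded dimension, so the same $k$ controls $\dim C_{q,d,m}(h)$ as well.

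Once the non-parametric version is established, the parametric gluing from Theorem 1.2 transfers without modification. Using $\dim Y = 0$, represent $Y$ as an inverse limit of partitions into clopen pieces $\{V_\alpha\}$, apply the non-parametric density result on each preimage $f^{-1}(V_\alpha)$, and patch the resulting perturbations along the cover, exploiting the flexibility of the source limitation topology exactly as in Theorem 1.2. The $G_\delta$ assertion is obtained by writing $\mathcal{Q}(q,d,m,k) = \bigcap_i U_i$, where $U_i$ collects maps $g$ such that, uniformly in $y \in Y$, the compact set $C_{q,d,m}(g|f^{-1}(y))$ admits an $\varepsilon_i$-fine cover of order $\leq k+1$, which is the same dimensional witness used in Theorem 1.2 for $B_{q,d,m}$.

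The principal technical obstacle is verifying openness of each $U_i$, which reduces to upper semi-continuity of the correspondence $y \mapsto C_{q,d,m}(g|f^{-1}(y))$ with respect to small uniform perturbations of $g$. If $g_n \to g$ uniformly and $(y_1^{(n)},\ldots,y_q^{(n)}) \in C_{q,d,m}(g_n|f^{-1}(y_n))$ accumulates at $(y_1,\ldots,y_q)$ with $y_n \to y$, then compactness of $f^{-1}(y)$ supplies candidate preimages $x_i \in f^{-1}(y)$ with $g(x_i)=y_i$; what may fail in the limit is the distinctness of the $x_i$. The fix is the one already adopted in Theorem 1.2 for $B_{q,d,m}$: replace $C_{q,d,m}(g|f^{-1}(y))$ by its closure in $(\mathbb{R}^m)^q$, check that this does not change the dimension bound $k$, and then invoke Hausdorff upper semi-continuity of compact-valued correspondences—an argument whose details are identical to the one in the proof of Theorem 1.2.
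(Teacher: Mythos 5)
Your route is the paper's own: the paper disposes of this theorem with the single remark that it follows from the proof of Theorem 1.2 by considering $C_{q,d,m}(g)$ in place of $B_{q,d,m}(g)$, and your plan (a non-parametric $C$-analogue of Theorem 1.1 first, then the fiberwise openness/density machinery of Section 5 over the $0$-dimensional base) is exactly that reading. One step as you describe it, however, is not the paper's device and is unsafe as stated: to restore closedness/upper semi-continuity you propose to replace $C_{q,d,m}(g|f^{-1}(y))$ by its closure in $(\mathbb R^m)^q$ and "check that this does not change the dimension bound". The closure picks up precisely the limit tuples whose preimages merge, and there is no a priori control of its dimension. What the proof of Theorem 1.2 actually does is fix $\eta>0$ and work with the subsets $B^\eta_{q,d,m}(g,y)$ whose witnessing preimages are pairwise $\eta$-separated: each such set is already closed (Lemma 5.1) and upper semi-continuous in $(g,y)$ (Lemma 5.2), and the full set is recovered as the countable union over $\eta=1/s$, the countable sum theorem for $\dim$ returning the bound. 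You should define $C^\eta_{q,d,m}(g,y)$ by the same $\eta$-separation and run that decomposition; no closures are taken anywhere.

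The second point to make precise is the transfer of the dimension bound from $B$ to $C$ via the incidence space. For the generic PL approximations $h$ with algebraically independent vertex coordinates, in cases (a), (b), (d) one has $d=1$ and the affine hulls $\Pi^{n_i}_i$ of the $h(K_i)$ are pairwise skew, so a line in $B_\Lambda(h,m,1)$ meets each $h(K_i)$ in finitely many (for (a), (b), exactly one) points, while a tuple in $C_\Lambda(h)$ determines its line uniquely; hence the incidence space is finite-to-one over $B_\Lambda$ and bijective onto $C_\Lambda$, giving $\dim C_\Lambda\leq\dim B_\Lambda$ as you claim. In case (c) the fiber $\Pi^d\cap h(K_1)$ over a plane can be positive-dimensional, so this transfer fails; but there no incidence argument is needed, since $C_{1,d,m}(h)=h(K_1)$ has dimension $\leq n\leq n+d(m-d)$ outright. (Do not shortcut this for arbitrary $g$ via $\dim g(X)\leq\dim X$ --- continuous maps can raise dimension; the bound is available exactly because the dense maps factor through the $n$-dimensional polyhedra $K_z=p^{-1}(z)$.) With these two repairs your argument coincides with the paper's.
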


Theorem 1.3 provides a partial answer of \cite[Question 5]{b2} and
\cite[Conjecture 3]{bv} in the case when $Y$ is a point, $m=T=3$,
$d=1$ and $t=0$.


\section{A preliminary information}
We are going to consider some general statements before proving
Theorem 1.1. Suppose $q\geq 1$ is an integer, $X$ is a metric
compactum.   Let $\Gamma=\{B_1,B_2,..,B_q\}$ be a disjoint family
consisting of $q$ closed subsets of $X$ and $g\in C(X,\mathbb R^m)$.
We denote
$$B_\Gamma(g,m,d)=\{\Pi^d\in M_{m,d}:g^{-1}(\Pi^d)\cap
B_i\neq\varnothing {~}\mbox{for each}{~}i=1,..,q\},$$ where $0\leq
d\leq m$. Now, define the set-valued map
\begin{center}
$\Phi_{\Gamma,m,d}\colon C(X,\mathbb R^m)\to M_{m,d},$
$\Phi_{\Gamma,m,d}(g)=B_\Gamma(g,m,d)$.
\end{center}

\begin{pro}
$\Phi_{\Gamma,m,d}$ is a closed-valued map and
$\Phi_{\Gamma,m,d}^{\sharp}(W)=\{g\in C(X,\mathbb R^m):
\Phi_{\Gamma,m,d}(g)\subset W\}$ is open in $C(X,\mathbb R^m)$ for
every open $W\subset M_{m,d}$.
\end{pro}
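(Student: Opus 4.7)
The plan is to split Proposition~2.1 into two claims: (i) for each $g\in C(X,\mathbb R^m)$ the value $B_\Gamma(g,m,d)$ is closed in $M_{m,d}$, and (ii) $\Phi_{\Gamma,m,d}$ is upper semicontinuous, i.e., $\Phi_{\Gamma,m,d}^{\sharp}(W)$ is open whenever $W\subset M_{m,d}$ is open. Both parts rest on the same two ingredients: each $B_i$ is compact (being closed in the compactum $X$), and the incidence relation $\{(y,\Pi^d)\in\mathbb R^m\times M_{m,d}:y\in\Pi^d\}$ is closed, a standard feature of the metric topology of \cite{dnf}.

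For (i), I would fix $g$ and take $\Pi^d_n\in B_\Gamma(g,m,d)$ with $\Pi^d_n\to\Pi^d$ in $M_{m,d}$. For each $i\leq q$ pick $x_{i,n}\in B_i$ with $g(x_{i,n})\in\Pi^d_n$; a diagonal extraction using the compactness of the $B_i$ produces $x_{i,n}\to x_i\in B_i$, and the continuity of $g$ together with the closedness of the incidence relation yields $g(x_i)\in\Pi^d$. Hence $\Pi^d\in B_\Gamma(g,m,d)$.

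For (ii), I would argue by contradiction. Suppose $g_0\in\Phi_{\Gamma,m,d}^{\sharp}(W)$ is a uniform limit of maps $g_n\notin\Phi_{\Gamma,m,d}^{\sharp}(W)$; pick $\Pi^d_n\in B_\Gamma(g_n,m,d)\setminus W$ with witnesses $x_{i,n}\in B_i$ satisfying $g_n(x_{i,n})\in\Pi^d_n$. Compactness of the $B_i$ and uniform convergence $g_n\to g_0$ let me assume, after passing to a subsequence, that $x_{i,n}\to x_i\in B_i$ and $g_n(x_{i,n})\to g_0(x_i)$. The decisive point is that the planes $\Pi^d_n$ all meet the bounded set $\{g_n(x_{1,n}):n\in\mathbb N\}\cup\{g_0(x_1)\}$, and the subset of $M_{m,d}$ consisting of $d$-planes that intersect a fixed compact region is relatively compact. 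After a further extraction $\Pi^d_n\to\Pi^d$; passing to the limit in $g_n(x_{i,n})\in\Pi^d_n$ gives $g_0(x_i)\in\Pi^d$ for each $i$, hence $\Pi^d\in B_\Gamma(g_0,m,d)\subset W$. Since $W$ is open, $\Pi^d_n\in W$ for all large $n$, contradicting the choice of $\Pi^d_n$.

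The main obstacle is the relative compactness claim invoked in step (ii). I expect it to follow from the structure of $M_{m,d}$ as a bundle over the compact Grassmannian $\Gr(d,m)$: a $d$-plane is determined by its direction together with an offset in the orthogonal complement, and intersecting a fixed compact set forces that offset to lie in a bounded region. Alternatively, one can avoid any heavy machinery by choosing for each $n$ an orthonormal frame of $\Pi^d_n$ together with the basepoint $g_n(x_{1,n})$, extracting convergent subsequences in $\Gr(d,m)\times\mathbb R^m$, and reconstructing the limiting plane directly.
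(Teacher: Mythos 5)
Your proposal is correct and follows essentially the same route as the paper: a proof by contradiction that extracts convergent subsequences of the witnesses $x_{i,n}\in B_i$ and of the planes $\Pi^d_n$, using compactness of the $B_i$ and the fact that $d$-planes meeting a fixed compact set form a relatively compact family, then passes to the limit to land in $B_\Gamma(g_0,m,d)\subset W$. The only (inessential) difference is how that compactness of planes is realized -- the paper intersects the planes with a large closed ball $K$ and works in the hyperspace $\mathrm{exp}(K)$ with the Vietoris topology, while you propose the direction-plus-offset description of $M_{m,d}$ over the Grassmannian; both devices deliver the same subsequence extraction.
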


\begin{proof}
Suppose $g_0\in\Phi_{\Gamma,m,d}^{\sharp}(W)$ with $W\subset\mathbb
R^m_d$ being open. It suffices to show there exists $\delta>0$ such
that for any $g\in C(X,\mathbb R^m)$ which is $\delta$-close to
$g_0$ we have $B_\Gamma(g,m,d)\subset W$. Assume this is not true.
So, for each $n$ there exists $g_n\in C(X,\mathbb R^m)$ which is
$(1/n)$-close to $g_0$ and $\Pi^d_n\in B_\Gamma(g_n,m,d)$ with
$\Pi^d_n\not\in W$. For any $i\leq q$ and $n\geq 1$ there exists a
point $x_n^i\in B_i\cap g_n^{-1}(\Pi^d_n)$. Since $P=\bigcup_{i\leq
q}g_0(B_i)\subset\mathbb R^m$ is compact, we take a closed ball $K$
in $\mathbb R^m$ with center the origin containing $P$ in its
interior. Because every $\Pi^d\in B_\Gamma(g_0,m,d)$ intersects $P$,
we can identify $B_\Gamma(g_0,m,d)$ with $\{\Pi^d\cap K:\Pi^d\in
B_\Gamma(g_0,m,d)\}$ considered as a subspace of $\mathrm{exp}(K)$
(here $\mathrm{exp}(K)$ is the hyperspace of all compact subset of
$K$ equipped with the Vietoris topology).

Having in mind that for any $x\in X$ the distance in $\mathbb R^m$
between $g_0(x)$ and each $g_n(x)$ is $\leq 1$,  we can assume that
$K$ contains each set $\bigcup_{i\leq q}g_n(B_i)$, $n\geq 1$. Hence,
$g_n(x_n^i)\in K\cap\Pi^d_n$ for all $i\leq q$ and $n\geq 1$.
Therefore, passing to subsequences, we may suppose that there exist
points $x_0^i\in B_i$ $i\leq q$, and a plane $\Pi^d_0\in M_{m,d}$
such that each sequence $\{x_n^i\}_{n\geq 1}$, $i=1,2,..,q$,
converges to $x_0^i\in B_i$ and $\{\Pi^d_n\cap K\}_{n\geq 1}$
converges to $\Pi^d_0\cap K$. So, $\lim\{g_0(x_n^i)\}_{n\geq
1}=g_0(x_0^i)$, $i=1,2,..,q$. Then each $\{g_n(x_n^i)\}_{n\geq 1}$
also converges to $g_0(x_0^i)$. Consequently, $g_0(x_0^i)\in\Pi^d_0$
for all $i$, so $\Pi^d_0\in B_\Gamma(g_0,m,d)$. Hence, $\Pi^d_0\in
W$. Since $W$ is open in $M_{m,d}$ and $\lim\{\Pi^d_n\cap K\}_{n\geq
1}=\Pi^d_0\cap K$ implies that $\{\Pi^d_n\}_{n\geq 1}$ converges to
$\Pi^d_0$ in $M_{m,d}$, $\Pi^d_n\in W$ for almost all $n$, a
contradiction.

The above arguments also show that each $B_\Gamma(g,m,d)$ is closed
in $M_{m,d}$. So, $\Phi_{\Gamma,m,d}$ is a closed-valued map.
\end{proof}

\begin{cor}
Let $X$ and the integers $0\leq d\leq m$ be as in Proposition $2.1$.
Then $\mathcal H(q,d,m,n)$ is a $G_\delta$-subset of $C(X,\mathbb
R^m)$ for any $n\geq 0$ and $q\geq 1$.
\end{cor}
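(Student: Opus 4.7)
The plan is to express $B_{q,d,m}(g)$ as a countable union of closed sets of the form $B_\Gamma(g,m,d)$, reduce via the countable sum theorem for dimension to showing that each $\{g:\dim B_\Gamma(g,m,d)\leq n\}$ is $G_\delta$ in $C(X,\mathbb R^m)$, and then combine Proposition 2.1 with the finite-cover characterisation of dimension for compact metric spaces.

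Since $X$ is separable, fix a countable base $\{U_k\}_{k\geq 1}$ of $X$ and let $\mathcal D_0$ be the (countable) family of $q$-tuples $\Gamma=(\overline{U}_{k_1},\dots,\overline{U}_{k_q})$ whose members are pairwise disjoint. A short argument yields
\[
B_{q,d,m}(g)\;=\;\bigcup_{\Gamma\in\mathcal D_0}B_\Gamma(g,m,d).
\]
Indeed, if $\Gamma\in\mathcal D_0$ and $\Pi^d\in B_\Gamma(g,m,d)$, picking $x_i\in\overline{U}_{k_i}\cap g^{-1}(\Pi^d)$ produces $q$ distinct points in $g^{-1}(\Pi^d)$; conversely $q$ distinct preimages can always be separated by small basic neighbourhoods with pairwise disjoint closures. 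By Proposition 2.1 each $B_\Gamma(g,m,d)$ is closed in $M_{m,d}$, and is in fact compact, since every plane it contains meets the compact set $g(\overline{U}_{k_1})$, and the hyperspace identification used in the proof of Proposition 2.1 embeds the collection of such planes as a closed subspace of a compact hyperspace $\mathrm{exp}(K)$. The countable sum theorem for dimension therefore gives
\[
\mathcal H(q,d,m,n)\;=\;\bigcap_{\Gamma\in\mathcal D_0}\mathcal H_\Gamma,\qquad\mathcal H_\Gamma:=\{g\in C(X,\mathbb R^m):\dim B_\Gamma(g,m,d)\leq n\},
\]
so it suffices to prove that each $\mathcal H_\Gamma$ is $G_\delta$.

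For this, use the characterisation that a compact metric space $K$ has $\dim K\leq n$ iff for every $k\geq 1$ it admits a finite open cover of mesh less than $1/k$ and multiplicity at most $n+1$. Writing $\mathcal H_{\Gamma,k}$ for the set of maps $g$ for which $B_\Gamma(g,m,d)$ admits such a cover, we get $\mathcal H_\Gamma=\bigcap_{k\geq 1}\mathcal H_{\Gamma,k}$. To verify $\mathcal H_{\Gamma,k}$ is open, take $g_0\in\mathcal H_{\Gamma,k}$ with a witnessing cover $\mathcal W=\{W_1,\dots,W_r\}$ and set $W=W_1\cup\dots\cup W_r$. Then $B_\Gamma(g_0,m,d)\subset W$, so Proposition 2.1 furnishes an open neighbourhood $\Phi_{\Gamma,m,d}^{\sharp}(W)$ of $g_0$ on which $B_\Gamma(g,m,d)\subset W$; the family $\mathcal W$ then covers $B_\Gamma(g,m,d)$ with unchanged mesh and multiplicity, placing $g$ in $\mathcal H_{\Gamma,k}$.

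The only nontrivial point, to my mind, is the compactness of $B_\Gamma(g,m,d)$, needed so that the finite-cover characterisation of dimension applies; this follows from the same hyperspace identification embedded in the proof of Proposition 2.1. Once that is in hand, the rest of the argument is routine bookkeeping.
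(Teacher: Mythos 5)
Your proposal is correct and follows essentially the same route as the paper: decompose $B_{q,d,m}(g)$ over disjoint $q$-tuples drawn from a countable base, apply the countable sum theorem, characterise $\dim\leq n$ by open covers of small mesh and order $\leq n+1$, and use Proposition 2.1 to see that each resulting condition is open in $C(X,\mathbb R^m)$. The only cosmetic difference is that you invoke compactness of $B_\Gamma(g,m,d)$ to work with finite covers, while the paper works directly with (not necessarily finite) families open in $M_{m,d}$; both variants are standard and equivalent here.
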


\begin{proof}
We choose a countable family $\mathcal B$ of closed subsets of $X$
such that the interiors of the elements of $\mathcal B$ form a base
for the topology of $X$. Let $\epsilon>0$ and $\Gamma$ be a disjoint
family of $q$ elements of $\mathcal B$. Denote by $\mathcal H_\Gamma
(q,d,m,n,\epsilon)$ the set of all maps $g\colon X\to\mathbb R^m$
such that $B_\Gamma(g,m,d)$ can be covered by an open in $M_{m,d}$
family $\omega$ satisfying the following conditions:
\begin{itemize}
\item[(1)] $\mathrm{mesh}(\omega)<\epsilon$;
\item[(2)] the order of $\omega$ is $\leq n$ (i.e., each point from $M_{m,d}$
is contained in at most $n+1$ elements of $\omega$).
\end{itemize}
Let $g_0\in\mathcal H_\Gamma (q,d,m,n,\epsilon)$ and
$W=\bigcup\{U:U\in\omega\}$. Then $B_\Gamma(g_0,m,d)\subset W$.
According to Proposition 2.1, the set $G=\{g\in C(X,\mathbb
R^m):B_\Gamma(g,m,d)\subset W\}$ is open in $C(X,\mathbb R^m)$, it
contains $g_0$ and $G$ contains $\mathcal H_\Gamma
(q,d,m,n,\epsilon)$. Hence, each $\mathcal
H_\Gamma(q,d,m,n,\epsilon)$ is also open in $C(X,\mathbb R^m)$.

We claim that $$\displaystyle\bigcap\{\mathcal H_\Gamma
(q,d,m,n,1/k):k\geq 1{~}\mbox{and}{~}\Gamma\in\mathcal
B(q)\}=\mathcal H(q,d,m,n),\leqno{(3)}$$ where $\mathcal B(q)$ is
the family of all disjoint subsets of $\mathcal B$ having $q$
elements. Indeed, according to Proposition 2.1, each
$B_\Gamma(g,m,d)$ is a closed subset of $M_{m,d}$. Moreover,
$\displaystyle
B_{q,d,m}(g)=\bigcup\{B_\Gamma(g,m,d):\Gamma\in\mathcal B(q)\}$. So,
by the countable sum theorem for $\dim$, we have $\dim
B_{q,d,m}(g)\leq n$ if and only if $\dim B_\Gamma(g,m,d)\leq n$ for
every $\Gamma\in\mathcal B(q)$. This easily implies equality (3).
Therefore, $\mathcal H(q,d,m,n)$ is $G_\delta$ in $C(X,\mathbb
R^m)$.
\end{proof}

\section{Grassmann manifolds and general position of points and planes}
Let $V^m$ be a vector space of dimension $m$. The Grassmann manifold
$G_{V^m,d}$ (briefly, $\displaystyle G_{m,d}$) is the set of all
$d$-dimensional (vector) subspaces $V^d$ of $V^m$. It is well known
that $\displaystyle G_{m,d}$ has the structure of a smooth compact
manifold, which can be identified with the quotient space
$\displaystyle O(m)/\big(O(d)\times O(m-d)\big)$ (for example, see
\cite[Part II, Chapter 1]{dnf}). Here, $O(m)$ is the orthogonal
group of degree $m$. Since $\dim O(k)=k(k-1)/2$, this implies
$\displaystyle \dim G_{m,d}=(m-d)d$. Suppose $\displaystyle
V^{n_i}_i$ and $0\leq r_i\leq n_i$, $i=1,2,..,k$, are fixed
subspaces of $V^m$ and integers, respectively. Then we denote
$$\displaystyle G_{V^m,d;V^{n_1}_1,r_1;...;V^{n_k}_k,r_k}=\{V^d\in G_{m,d}:\dim
V^d\cap V^{n_i}_i=r_i, i=1,2,..,k\}.$$ Sometimes, instead of
$\displaystyle G_{V^m,d;V^{n_1}_1,r_1;...;V^{n_k}_k,r_k}$ we use the
simpler notation $\displaystyle
G_{m,d;V^{n_1}_1,r_1;...;V^{n_k}_k,r_k}$. If $\displaystyle \dim
(V^{n_1}_1+...+V^{n_k}_k)=n_1+...+n_k$, we write $\displaystyle
G_{m,d;n_1,r_1;...;n_k,r_k}$ instead of $\displaystyle
G_{m,d;V^{n_1}_1,r_1;...;V^{n_k}_k,r_k}$. We have $\displaystyle
G_{m,d;n,r}\neq\varnothing$ if and only if $$0\leq r\leq d\leq
n+d-r\leq m.\leqno{(4)}$$ We are going to consider also the sets
$$\displaystyle G_{m,d;n_1,\geq r_1;...;n_k,\geq r_k}=\bigcup\{G_{m,d;n_1,r_1';...;n_k,r_k'}:r_i'\geq r_i, i=1,...,k\},$$
which are closed in $G_{m,d}$. Since $\displaystyle G_{m,d;n,\geq
r}=G_{m,d;n,\geq r+1}\cup G_{m,d;n,r}$, we have
$$\displaystyle \dim G_{m,d;n,\geq r}=\max\{\dim G_{m,d;n,\geq
r+1},\dim G_{m,d;n,r}\}.\leqno{(5)}$$

Recall that $M_{m,d}$ stands the set of all $d$-planes
$\displaystyle \Pi^d\subset\mathbb R^m$. If $\Pi^{n_i}_i$ and
$-1\leq r_i\leq n_i$, $i=1,2,..,k$, are fixed planes and integers,
we denote
$$\displaystyle M_{m,d;\Pi^{n_1}_1,r_1;...;\Pi^{n_k}_k,r_k}=\{\Pi^d\in M_{m,d}:\dim
\Pi^d\cap\Pi^{n_i}_i=r_i, i=1,2,..,k\}.$$ Identifying every
$d$-plane in $\mathbb R^m$ with a $(d+1)$-dimensional subspace in
$\mathbb R^{m+1}$, we obtain the inclusion
$$\displaystyle M_{m,d;\Pi^{n_1}_1,r_1;...;\Pi^{n_k}_k,r_k}\subset
G_{m+1,d+1;V^{n_1+1}_1,r_1+1;...;V^{n_k+1}_k,r_k+1},$$ where
$V^{n_i+1}_i$ is the subspace of $\mathbb R^{m+1}$ corresponding to
$\Pi^{n_i}_i$. Therefore,
$$\displaystyle \dim M_{m,d;\Pi^{n_1}_1,r_1;...;\Pi^{n_k}_k,r_k}\leq\dim
G_{m+1,d+1;V^{n_1+1}_1,r_1+1;...;V^{n_k+1}_k,r_k+1}.$$

$\Pi(S)$, where $S$ is a subset of $\mathbb R^m$, denotes the affine
hull of $S$, i.e., the smallest affine subspace of $\mathbb R^m$
containing $S$. We say that the planes $\Pi^{n_i}_i$, $i=1,...,k$,
are {\em jointly skew} provided
$$\displaystyle \dim\Pi\big(\Pi^{n_1}_1\cup\Pi^{n_2}_2...\cup\Pi^{n_k}_k\big)=n_1+...+n_k+k-1.$$
In such a case we use the notation $\displaystyle
M_{m,d;n_1,r_1;...;n_k,r_k}$ instead of the general one
$\displaystyle M_{m,d;\Pi^{n_1}_1,r_1;...;\Pi^{n_k}_k,r_k}$.

\begin{pro}
If the integers $m,d,n,r$ satisfy the inequalities $(4)$, then
$\displaystyle \dim G_{m,d;n,r}=(n-r)r+(m-d)(d-r)$.
\end{pro}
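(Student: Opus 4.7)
The plan is to analyze the ``intersection'' map
\[
\pi : G_{m,d;n,r}\to G_{n,r},\qquad V^d\mapsto V^d\cap V^n,
\]
and extract $\dim G_{m,d;n,r}$ as the sum of the dimension of the base and of a typical fibre.

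First I would check that $\pi$ is well-defined (immediate from the definition of $G_{m,d;n,r}$) and surjective. For surjectivity, given $W^r\in G_{n,r}$, inequality (4) yields $d-r\le m-n$, so one can pick a $(d-r)$-dimensional subspace $U$ inside a fixed complement of $V^n$ in $V^m$; then $V^d:=W^r+U$ lies in $G_{m,d;n,r}$ with $\pi(V^d)=W^r$. Next I would identify the fibre over $W^r$: passing to the quotient $V^m/W^r$, the rule $V^d\mapsto V^d/W^r$ puts $\pi^{-1}(W^r)$ in bijection with the set of $(d-r)$-dimensional subspaces $\bar V\subset V^m/W^r$ transverse to the $(n-r)$-dimensional subspace $V^n/W^r$. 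This is a nonempty open subset of $G_{m-r,d-r}$, and hence has dimension $(d-r)\bigl((m-r)-(d-r)\bigr)=(d-r)(m-d)$. Combined with the standard formula $\dim G_{n,r}=r(n-r)$, the two numbers already add to the conjectured total.

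\textbf{The main obstacle} is upgrading this heuristic ``base plus fibre'' count into an honest dimension formula, i.e.\ showing that $\pi$ is a locally trivial smooth fibration. I would handle this by a homogeneity argument: the stabilizer $H\subset GL(V^m)$ of $V^n$ acts on $G_{m,d;n,r}$, and one checks transitivity by the standard adapted-basis construction. For each $V^d_i\in G_{m,d;n,r}$ ($i=1,2$), build a basis of $V^m$ whose first $r$ vectors span $V^d_i\cap V^n$, whose first $d$ vectors span $V^d_i$, and whose first $n$ vectors (the initial $r$ plus an extension in $V^n$) span $V^n$; the linear map exchanging the two ordered bases preserves $V^n$ and carries $V^d_1$ to $V^d_2$. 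Hence $G_{m,d;n,r}$ is a smooth homogeneous $H$-space, the map $\pi$ is $H$-equivariant over the transitive $H$-action on $G_{n,r}$, and it is therefore a locally trivial smooth fibration. Adding the dimensions of base and fibre yields
\[
\dim G_{m,d;n,r}=r(n-r)+(m-d)(d-r)=(n-r)r+(m-d)(d-r),
\]
as desired. As sanity checks, $r=0$ gives the open cell $G_{m,d}$ of dimension $d(m-d)$, $r=d$ gives $G_{n,d}$ of dimension $d(n-d)$, and $d=m$ forces $r=n$ and a point, all consistent.
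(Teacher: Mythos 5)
Your proof is correct and follows essentially the same route as the paper: both fibre the set over $G_{n,r}$ via $V^d\mapsto V^d\cap V^n$, identify the fibre with a nonempty open subset of $G_{m-r,d-r}$ (you via the quotient $V^m/W^r$, the paper via the orthogonal complement $(V^r)^\bot$), and add dimensions. Your homogeneity argument for local triviality is a welcome justification of a step the paper merely asserts, but it does not change the underlying approach.
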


\begin{proof}
Let $r=d$. In this case $G_{m,d;n,r}$ consists of all
$r$-dimensional subspaces of $V^n$, i.e., $\displaystyle
G_{m,d;n,r}=G_{n,d}$. Hence, $\displaystyle \dim
G_{m,d;n,r}=(n-r)r$.

If $r=0$, then $n+d\leq m$ and $\displaystyle G_{m,d;n,0}$ is a
non-empty open subset of $G_{m,d}$. Consequently, $\dim
G_{m,d;n,0}=(m-d)d$.

Suppose $0<r<d$ and consider the map $\displaystyle
\varphi:G_{m,d;V^n,r}\to G_{V^n,r}$, $\varphi(V^d)=V^d\cap V^n$.
This map is a locally trivial bundle whose fibre is the space
$\displaystyle
\varphi^{-1}(V^r)=G_{(V^r)^\bot,d-r;V^n\cap(V^r)^\bot,0}=G_{m-r,d-r;n-r,0}$,
where $(V^r)^\bot$ is the orthogonal complement of $V^r$ in $V^m$.
Therefore, according to the previous two cases, $\displaystyle\dim
G_{m,d;n,r}=\dim G_{n,r}+\dim G_{m-r,d-r;n-r,0}=(n-r)r+(m-d)(d-r)$.
\end{proof}

\begin{pro}
If the integers $m,d,n,r$ satisfy the inequalities $(4)$, then
$\displaystyle \dim G_{m,d;n,\geq r}=(n-r)r+(m-d)(d-r)$.
\end{pro}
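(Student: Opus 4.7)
The plan is to proceed by downward induction on $r$, combining Proposition~3.1 with the recursion $(5)$. Write $f(r) = (n-r)r + (m-d)(d-r)$ for the target expression; note that the formula in Proposition~3.1 says exactly $\dim G_{m,d;n,r} = f(r)$.

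For the base case I would take $r = \min(d,n)$. The inequalities $(4)$ imply $r \leq \min(d,n)$, so at the base value there is no admissible $r' > r$, i.e.\ $G_{m,d;n,r'} = \varnothing$ whenever $r' > r$. Consequently $G_{m,d;n,\geq r} = G_{m,d;n,r}$ and Proposition~3.1 gives $\dim G_{m,d;n,\geq r} = f(r)$ directly.

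For the inductive step, assume $r < \min(d,n)$ and the formula is known at $r+1$. One first checks that $(4)$ continues to hold with $r+1$ in place of $r$: the only nontrivial inequality $n+d-(r+1) \leq m$ is an immediate consequence of $r \geq n+d-m$, which is part of $(4)$. Hence the inductive hypothesis yields $\dim G_{m,d;n,\geq r+1} = f(r+1)$, while Proposition~3.1 gives $\dim G_{m,d;n,r} = f(r)$. Formula $(5)$ then reduces the claim to the numerical inequality $f(r) \geq f(r+1)$.

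The only real computation, and essentially the whole content of the argument, is this inequality. A direct expansion gives $f(r) - f(r+1) = m - n - d + 2r + 1$, and then a second application of the bound $r \geq n+d-m$ from $(4)$ yields $f(r) - f(r+1) \geq r + 1 > 0$. I do not anticipate any genuine obstacle; the only mildly delicate point is tracking when $G_{m,d;n,\geq r+1}$ is actually nonempty, which is handled cleanly by the case split $r = \min(d,n)$ versus $r < \min(d,n)$ and by the observation that $(4)$ is preserved under $r \mapsto r+1$ in the latter case.
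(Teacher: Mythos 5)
Your proof is correct and is essentially the paper's own argument made explicit: the paper likewise combines Proposition~3.1 with the recursion $(5)$ and the key monotonicity $\dim G_{m,d;n,r+1}<\dim G_{m,d;n,r}$, which is exactly your inequality $f(r)-f(r+1)=m-n-d+2r+1>0$. Your only additions are the explicit base case $r=\min(d,n)$ and the verification that $(4)$ persists under $r\mapsto r+1$, which the paper leaves implicit.
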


\begin{proof}
Indeed, according to (4) and Proposition 3.1, $\displaystyle \dim
G_{m,d;n,r+1}<\dim G_{m,d;n,r}$. Then the required inequality
follows from (5).
\end{proof}

\begin{cor}
If the integers $m,d,n,r$ satisfy the inequalities $-1\leq r\leq
d\leq n+d-r\leq m$, then $\displaystyle \dim M_{m,d;n,\geq r}\leq
(n-r)(r+1)+(m-d)(d-r)$.
\end{cor}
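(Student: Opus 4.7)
The plan is to reduce Corollary 3.3 to Proposition 3.2 via the index-shifting inclusion displayed just before the statement, so the bound on a set of $d$-planes in $\mathbb R^m$ becomes a bound on a set of $(d+1)$-dimensional subspaces in $\mathbb R^{m+1}$.

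First I would note that the text already records the inclusion
\[
M_{m,d;\Pi^{n}_{},r}\subset G_{m+1,d+1;V^{n+1}_{},r+1},
\]
where $V^{n+1}$ is the linear subspace of $\mathbb R^{m+1}$ associated with the affine plane $\Pi^{n}$. Taking unions over all admissible $r'\geq r$ on the left and correspondingly $r'+1\geq r+1$ on the right, the same identification gives
\[
M_{m,d;n,\geq r}\subset G_{m+1,d+1;n+1,\geq r+1}.
\]
Since dimension is monotone under inclusion, it suffices to bound the right-hand side.

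Next I would verify that the new parameters $(m',d',n',r')=(m+1,d+1,n+1,r+1)$ satisfy the inequalities $(4)$ needed to invoke Proposition 3.2. The hypothesis $-1\leq r\leq d\leq n+d-r\leq m$ translates term-by-term to $0\leq r+1\leq d+1\leq (n+1)+(d+1)-(r+1)\leq m+1$, which is exactly $(4)$ for the shifted parameters. Hence Proposition 3.2 applies and yields
\[
\dim G_{m+1,d+1;n+1,\geq r+1}=\bigl((n+1)-(r+1)\bigr)(r+1)+\bigl((m+1)-(d+1)\bigr)\bigl((d+1)-(r+1)\bigr).
\]

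Finally, a direct simplification gives $(n-r)(r+1)+(m-d)(d-r)$, which produces the desired bound $\dim M_{m,d;n,\geq r}\leq (n-r)(r+1)+(m-d)(d-r)$. There is no real obstacle here: the only thing to watch is the index shift (in particular that $r=-1$ is allowed in $M$ precisely because it corresponds to $r+1=0$ in $G$, i.e.\ the generic transverse case), and that the inclusion above is dimension-monotone, which follows from it being a continuous injection onto a subspace with its inherited topology.
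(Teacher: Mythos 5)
Your proposal is correct and follows exactly the paper's own route: the paper proves Corollary 3.3 by combining Proposition 3.2 with the inclusion $M_{m,d;n,\geq r}\subset G_{m+1,d+1;n+1,\geq r+1}$, which is precisely your argument. You merely spell out the parameter shift and the arithmetic that the paper leaves implicit.
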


\begin{proof}
This follows from Proposition 3.2 and the inclusion $\displaystyle
M_{m,d;n,\geq r}\subset G_{m+1,d+1;n+1,\geq r+1}$.
\end{proof}

\begin{pro}
If the integers $m,n_1,r_1,n_2,r_2$ satisfy the equalities  $0\leq
r_1\leq n_1$, $0\leq r_2\leq n_2$ and $n_1+n_2\leq m$, then
$\displaystyle \dim
G_{m,r_1+r_2;n_1,r_1;n_2,r_2}=(n_1-r_1)r_1+(n_2-r_2)r_2$.
\end{pro}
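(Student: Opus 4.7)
The plan is to show that $G_{m,r_1+r_2;n_1,r_1;n_2,r_2}$ is homeomorphic to the product $G_{n_1,r_1}\times G_{n_2,r_2}$, which immediately gives the desired dimension by the well-known formula $\dim G_{n,r}=(n-r)r$ applied to each factor.

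The key observation is that the ``jointly skew'' hypothesis $n_1+n_2\leq m$ together with $\dim(V^{n_1}_1+V^{n_2}_2)=n_1+n_2$ (which is built into the notation $G_{m,d;n_1,r_1;n_2,r_2}$) forces $V^{n_1}_1\cap V^{n_2}_2=\{0\}$. Now take any $V^d\in G_{m,r_1+r_2;n_1,r_1;n_2,r_2}$ and set $A_i=V^d\cap V^{n_i}_i$, so $\dim A_i=r_i$ by the defining condition. Since $A_1\cap A_2\subset V^{n_1}_1\cap V^{n_2}_2=\{0\}$, we have $\dim(A_1+A_2)=r_1+r_2=\dim V^d$, and since $A_1+A_2\subset V^d$, this forces $V^d=A_1\oplus A_2$.

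This suggests the map
\[
\varphi:G_{m,r_1+r_2;n_1,r_1;n_2,r_2}\longrightarrow G_{n_1,r_1}\times G_{n_2,r_2},\qquad
\varphi(V^d)=(V^d\cap V^{n_1}_1,\,V^d\cap V^{n_2}_2),
\]
with candidate inverse $\psi(W_1,W_2)=W_1\oplus W_2$. Step one is to verify that $\psi$ lands in the correct stratum: for $W_i\subset V^{n_i}_i$ of dimension $r_i$, the sum $W_1+W_2$ has dimension $r_1+r_2$ (again by $V^{n_1}_1\cap V^{n_2}_2=\{0\}$), and if $x\in(W_1\oplus W_2)\cap V^{n_1}_1$, writing $x=w_1+w_2$ with $w_i\in W_i$ gives $w_2=x-w_1\in V^{n_1}_1\cap V^{n_2}_2=\{0\}$, so $x=w_1\in W_1$; hence $(W_1\oplus W_2)\cap V^{n_1}_1=W_1$, and symmetrically for the second intersection. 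So $\psi$ is well-defined, and the computation above gives $\varphi\circ\psi=\mathrm{id}$ and $\psi\circ\varphi=\mathrm{id}$.

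The only remaining point is continuity of both maps in the Grassmannian topology; this is routine, since in local coordinates on the Grassmannian the operations ``intersect with a fixed subspace'' and ``direct sum of two subspaces in general position'' are given by smooth (rational) formulas. Hence $\varphi$ is a homeomorphism, and
\[
\dim G_{m,r_1+r_2;n_1,r_1;n_2,r_2}=\dim G_{n_1,r_1}+\dim G_{n_2,r_2}=(n_1-r_1)r_1+(n_2-r_2)r_2.
\]
The main obstacle, if any, is the bookkeeping of the jointly skew assumption; once one extracts the fact $V^{n_1}_1\cap V^{n_2}_2=\{0\}$, everything else follows from the elementary linear algebra above, and one does not need to invoke a fibre bundle argument as in Proposition 3.1.
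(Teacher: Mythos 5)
Your proof is correct and takes essentially the same route as the paper: both establish that $\varphi(V^{r_1+r_2})=\big(V^{r_1+r_2}\cap V^{n_1}_1,\,V^{r_1+r_2}\cap V^{n_2}_2\big)$ is a homeomorphism onto $G_{n_1,r_1}\times G_{n_2,r_2}$ with inverse $\psi(W_1,W_2)=W_1\oplus W_2$, and then add the dimensions of the factors. You merely spell out the linear-algebra verifications (that $V^{n_1}_1\cap V^{n_2}_2=\{0\}$ and that $\varphi$ and $\psi$ are mutually inverse) which the paper leaves implicit.
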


\begin{proof}
Define the map $\displaystyle\varphi\colon
G_{V^m,r_1+r_2;V^{n_1}_1,r_1;V^{n_2}_2,r_2}\to
G_{V^{n_1}_1,r_1}\times G_{V^{n_2}_2,r_2}$,
$\varphi(V^{r_1+r_2})=\big(V^{r_1+r_2}\cap V^{n_1}_1,V^{r_1+r_2}\cap
V^{n_2}_2\big)$. This map is bijection, its inverse is the map
$\displaystyle\psi\colon G_{V^{n_1}_1,r_1}\times
G_{V^{n_2}_2,r_2}\to G_{V^m,r_1+r_2;V^{n_1}_1,r_1;V^{n_2}_2,r_2}$
given by $\displaystyle\psi\big(V^{r_1},V^{r_2}\big)=V^{r_1}\oplus
V^{r_2}$. So, $\displaystyle G_{m,r_1+r_2;n_1,r_1;n_2,r_2}$ is
homeomorphic to $\displaystyle G_{n_1,r_1}\times G_{n_2,r_2}$.
Consequently, $\displaystyle \dim G_{m,r_1+r_2;n_1,r_1;n_2,r_2}=\dim
G_{n_1,r_1}+\dim G_{n_2,r_2}=(n_1-r_1)r_1+(n_2-r_2)r_2$.
\end{proof}

\begin{rem}
Observe that the following equality was established:\\
$G_{m,r_1+r_2;n_1,\geq r_1;n_2,\geq
r_2}=G_{m,r_1+r_2;n_1,r_1;n_2,r_2}$.
\end{rem}

Because $\displaystyle M_{m,r_1+r_2+1;n_1,\geq r_1;n_2,\geq
r_2}\subset G_{m+1,r_1+r_2+2;n_1+1,\geq r_1+1;n_2+1,\geq r_2+1}$,
Proposition 3.4 and Remark 3.5 imply the next corollary.
\begin{cor}
Let $\Pi^{n_1}_1$ and $\Pi^{n_2}_2$ be two skew planes in $\mathbb
R^m$. Then $\dim M_{m,r_1+r_2+1;n_1,\geq r_1;n_2,\geq r_2}\leq
(n_1-r_1)(r_1+1)+(n_2-r_2)(r_2+1)$.
\end{cor}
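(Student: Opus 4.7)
The plan is to use the inclusion $M_{m,r_1+r_2+1;n_1,\geq r_1;n_2,\geq r_2}\subset G_{m+1,r_1+r_2+2;n_1+1,\geq r_1+1;n_2+1,\geq r_2+1}$ recorded just before the statement (arising from the standard identification of affine $d$-planes in $\mathbb{R}^m$ with linear $(d{+}1)$-subspaces of $\mathbb{R}^{m+1}$), and then to compute the dimension of the Grassmannian stratum on the right by combining Remark 3.5 with Proposition 3.4. The inclusion immediately reduces the problem to a dimension estimate in the linear Grassmannian.

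First I would apply Remark 3.5 with the shifted parameters $(r_1{+}1,r_2{+}1)$ to conclude
$$G_{m+1,r_1+r_2+2;n_1+1,\geq r_1+1;n_2+1,\geq r_2+1}=G_{m+1,r_1+r_2+2;n_1+1,r_1+1;n_2+1,r_2+1},$$
noting that $(r_1{+}1)+(r_2{+}1)$ equals the dimension $r_1+r_2+2$ of the ambient subspace, which is the setting of Remark 3.5. Then I would invoke Proposition 3.4 with shifted parameters $(n_i{+}1,r_i{+}1)$ in ambient dimension $m+1$. Its hypotheses $0\leq r_i{+}1\leq n_i{+}1$ follow at once from $-1\leq r_i\leq n_i$, and the sum condition $(n_1{+}1)+(n_2{+}1)\leq m+1$ is equivalent to $n_1+n_2+1\leq m$, which is forced by the skewness hypothesis $\dim\Pi(\Pi^{n_1}_1\cup\Pi^{n_2}_2)=n_1+n_2+1$. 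Proposition 3.4 then yields
$$\dim G_{m+1,r_1+r_2+2;n_1+1,r_1+1;n_2+1,r_2+1}=(n_1-r_1)(r_1+1)+(n_2-r_2)(r_2+1),$$
and combining this with the inclusion above gives the desired bound.

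There is no real obstacle; the work is essentially bookkeeping on indices. The only mildly delicate point is verifying that the skewness of the two affine planes in $\mathbb{R}^m$ translates, under the identification $\mathbb{R}^m\hookrightarrow\mathbb{R}^{m+1}$, into the linear complementarity $\dim(V^{n_1+1}_1+V^{n_2+1}_2)=(n_1{+}1)+(n_2{+}1)$ required for Proposition 3.4 to apply to these specific linear subspaces. This is routine: the affine hull of $\Pi^{n_1}_1\cup\Pi^{n_2}_2$ in $\mathbb{R}^m$ corresponds to the linear span $V^{n_1+1}_1+V^{n_2+1}_2$ in $\mathbb{R}^{m+1}$, and their dimensions differ by exactly $1$, so skewness of the $\Pi^{n_i}_i$ is precisely the correct linear hypothesis in the ambient $\mathbb{R}^{m+1}$.
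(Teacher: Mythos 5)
Your proof is correct and follows exactly the route the paper itself takes: the inclusion $M_{m,r_1+r_2+1;n_1,\geq r_1;n_2,\geq r_2}\subset G_{m+1,r_1+r_2+2;n_1+1,\geq r_1+1;n_2+1,\geq r_2+1}$, then Remark 3.5 to replace the $\geq$-strata by the exact stratum, then Proposition 3.4 with the shifted parameters. The paper states this in one sentence; your version merely spells out the hypothesis checks (in particular that skewness gives $(n_1+1)+(n_2+1)\leq m+1$), which is a faithful elaboration rather than a different argument.
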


Let us note that because the planes $\Pi^{n_1}_1$ and $\Pi^{n_2}_2$
from Corollary 3.6 are skew, we have $m\geq n_1+n_2+1$.

\begin{pro}
Suppose the zero is the only common element of any two of the
subspaces $V_1^{n_1}, V_2^{n_2}, V^r\subset V^m$. If $V^r$ is
contained in a subspace $V^{2r}\subset V^m$ such that $\dim
V^{2r}\cap V_1^{n_1}=\dim V^{2r}\cap V_2^{n_2}=r$, then $V^r\subset
V_1^{n_1}\oplus V_2^{n_2}$. Moreover, $V^{2r}$ is uniquely
determined by the above conditions.
\end{pro}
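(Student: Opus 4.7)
The plan is to exploit the pairwise-trivial intersection hypothesis twice: once to force $V^{2r}$ to split as the direct sum of its intersections with $V_1^{n_1}$ and $V_2^{n_2}$, and once to pin down those intersections in terms of the projections of $V^r$.

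First I would set $W_1 = V^{2r}\cap V_1^{n_1}$ and $W_2 = V^{2r}\cap V_2^{n_2}$, each $r$-dimensional by hypothesis. Since $W_1\cap W_2\subset V_1^{n_1}\cap V_2^{n_2}=\{0\}$, the sum $W_1+W_2$ is direct and has dimension $2r$, which already equals $\dim V^{2r}$. Hence $V^{2r}=W_1\oplus W_2\subset V_1^{n_1}\oplus V_2^{n_2}$, and in particular $V^r\subset V^{2r}\subset V_1^{n_1}\oplus V_2^{n_2}$, which is the first assertion.

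For uniqueness, let $\pi_i\colon V_1^{n_1}\oplus V_2^{n_2}\to V_i^{n_i}$ be the canonical projections, and note that by the first part each vector of $V^r$ decomposes uniquely as $v=\pi_1(v)+\pi_2(v)$ with $\pi_i(v)\in V_i^{n_i}$. The restriction $\pi_1|_{V^r}$ is injective: if $\pi_1(v)=0$ then $v=\pi_2(v)\in V^r\cap V_2^{n_2}=\{0\}$, and symmetrically for $\pi_2$. Thus $\pi_1(V^r)$ and $\pi_2(V^r)$ are $r$-dimensional. Since $v=\pi_1(v)+\pi_2(v)$ lies in $V^{2r}$ with $\pi_1(v)\in V_1^{n_1}$ and $\pi_2(v)\in V_2^{n_2}$, and the decomposition $V^{2r}=W_1\oplus W_2$ is unique, we get $\pi_i(v)\in W_i$. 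This gives inclusions $\pi_i(V^r)\subset W_i$ of $r$-dimensional subspaces into $r$-dimensional subspaces, so $W_i=\pi_i(V^r)$. Consequently $V^{2r}=\pi_1(V^r)\oplus\pi_2(V^r)$ depends only on $V^r$, $V_1^{n_1}$, and $V_2^{n_2}$, proving uniqueness.

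I do not anticipate a serious obstacle; the only point to be careful about is that the hypothesis of pairwise trivial intersection is what makes both the direct-sum decomposition of $V^{2r}$ and the injectivity of $\pi_i|_{V^r}$ work. If one tried to weaken the hypothesis, either step could fail, so the main conceptual content is recognizing that the given intersection hypothesis is exactly what is needed to make $V^{2r}$ into the graph-like object $\pi_1(V^r)\oplus\pi_2(V^r)$.
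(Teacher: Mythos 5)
Your proof is correct. The first assertion is handled exactly as in the paper: the two $r$-dimensional intersections $W_i=V^{2r}\cap V_i^{n_i}$ meet only in $0$, so $V^{2r}=W_1\oplus W_2\subset V_1^{n_1}\oplus V_2^{n_2}$. For uniqueness, however, you take a genuinely different route. The paper constructs the candidate $W=\bigl(V^r\oplus V_1^{n_1}\bigr)\cap\bigl(V^r\oplus V_2^{n_2}\bigr)$ and verifies by dimension counting that it is $2r$-dimensional, contains $V^r$, and meets each $V_i^{n_i}$ in dimension $r$; this is essentially an existence argument, and the uniqueness claim is left implicit (one still has to note that any admissible $V^{2r}$ equals $V^r\oplus(V^{2r}\cap V_i^{n_i})$ and hence lies in both $V^r\oplus V_i^{n_i}$, forcing $V^{2r}\subset W$ and then equality by dimension). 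You instead identify the summands directly: comparing the decomposition $v=w_1+w_2$ in $W_1\oplus W_2$ with $v=\pi_1(v)+\pi_2(v)$ in $V_1^{n_1}\oplus V_2^{n_2}$ forces $\pi_i(v)=w_i$, and injectivity of $\pi_i|_{V^r}$ (from $V^r\cap V_j^{n_j}=0$) gives $W_i=\pi_i(V^r)$, hence the canonical formula $V^{2r}=\pi_1(V^r)\oplus\pi_2(V^r)$. This proves uniqueness cleanly and explicitly --- arguably more completely than the paper does --- but, unlike the paper's construction, it does not by itself show that such a $V^{2r}$ exists. That existence is not claimed in the statement, so your proof is complete as written; just be aware that the constructive form of the paper's argument is what gets used later (e.g.\ for the surjectivity of the correspondence in Proposition 3.9), and that your $\pi_1(V^r)\oplus\pi_2(V^r)$ coincides with the paper's $W$, so either description could serve that purpose.
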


\begin{proof}
For every such a space $V^{2r}$ the following inclusions hold:
$$\displaystyle V^{r}\subset V^{2r}=\big(V^{2r}\cap
V_1^{n_1}\big)\oplus\big(V^{2r}\cap V_2^{n_2}\big)\subset
V_1^{n_1}\oplus V_2^{n_2}.$$ Thus, $V^r\subset V_1^{n_1}\oplus
V_2^{n_2}$.

Next, consider the subspaces $W_1=V^{r}\oplus V_1^{n_1}$ and
$W_2=V^{r}\oplus V_2^{n_2}$. Since $V^r$ is in a general position
with respect to each $V_i^{n_i}$, $\dim W_i=r+n_i$, $i=1,2$. Then
$W=W_1\cap W_2$ is the required $(2r)$-dimensional subspace. Indeed,
$V^r\subset W$ and, because $V^r\subset V_1^{n_1}\oplus V_2^{n_2}$,
we have $W_1+W_2\subset V_1^{n_1}\oplus V_2^{n_2}$. Hence, $\dim
W=\dim W_1+\dim W_2-\dim (W_1+W_2)=(r+n_1)+(r+n_2)-(n_1+n_2)=2r$. We
also have that $V_1^{n_1}\cap W=V_1^{n_1}\cap W_2$ and
$V_1^{n_1}+W_2=V_1^{n_1}\oplus V_2^{n_2}$. Consequently,
$\dim\big(V_1^{n_1}\cap W\big)=\dim V_1^{n_1}+\dim
W_2-\dim\big(V_1^{n_1}\cap W_2\big)=n_1+r+n_2-(n_1+n_2)=r$.
Similarly, we can obtain that $\dim\big(V_2^{n_2}\cap W\big)=r$.
\end{proof}

\begin{cor}
Let any two of the planes $\Pi_1^{n_1}, \Pi_2^{n_2}, \Pi^r\subset
\mathbb R^m$ be skew. Then there exists at most one $(2r+1)$-plane
$\Pi^{2r+1}\subset\mathbb R^m$ containing $\Pi^r$ such that
$\dim\big(\Pi^{2r+1}\cap\Pi_i^{n_i}\big)\geq r$ for each $i=1,2$.
\end{cor}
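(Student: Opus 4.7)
The plan is to reduce the statement to the linear uniqueness result of Proposition 3.7 by passing from $\mathbb R^m$ to $\mathbb R^{m+1}$ in the usual way: identify each affine $d$-plane $\Pi^d\subset\mathbb R^m$ with the $(d+1)$-dimensional linear subspace $V^{d+1}\subset\mathbb R^{m+1}$ generated by $\Pi^d\times\{1\}$. Under this identification, two affine planes $\Pi^a,\Pi^b$ are skew (i.e.\ $\dim\Pi(\Pi^a\cup\Pi^b)=a+b+1$) if and only if the corresponding subspaces satisfy $V^{a+1}\cap V^{b+1}=\{0\}$, by the dimension formula $(a{+}1)+(b{+}1)-\dim(V^{a+1}\cap V^{b+1})=\dim(V^{a+1}+V^{b+1})$. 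Thus the pairwise skewness hypothesis on $\Pi_1^{n_1},\Pi_2^{n_2},\Pi^r$ translates into precisely the pairwise trivial-intersection hypothesis of Proposition 3.7 for the subspaces $V_1^{n_1+1},V_2^{n_2+1},V^{r+1}\subset\mathbb R^{m+1}$. A $(2r+1)$-plane $\Pi^{2r+1}\supset\Pi^r$ satisfying $\dim(\Pi^{2r+1}\cap\Pi_i^{n_i})\geq r$ corresponds to a $(2r+2)$-dimensional subspace $V^{2r+2}\supset V^{r+1}$ with $\dim(V^{2r+2}\cap V_i^{n_i+1})\geq r+1$ for $i=1,2$.

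Next I would upgrade these inequalities to equalities. Since $V^{r+1}\subset V^{2r+2}$ and $V^{r+1}\cap V_i^{n_i+1}=\{0\}$, the two subspaces $V^{r+1}$ and $V^{2r+2}\cap V_i^{n_i+1}$ sit inside the $(2r{+}2)$-dimensional ambient space $V^{2r+2}$ and meet trivially, so
\[
(r+1)+\dim\bigl(V^{2r+2}\cap V_i^{n_i+1}\bigr)\leq 2r+2,
\]
which forces $\dim(V^{2r+2}\cap V_i^{n_i+1})=r+1$ for each $i=1,2$.

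Finally, Proposition 3.7, applied with $r$ replaced by $r+1$ to the data $V_1^{n_1+1},V_2^{n_2+1},V^{r+1}$, yields the uniqueness of $V^{2r+2}$; translating back through the affine–linear correspondence gives the uniqueness of $\Pi^{2r+1}$. I do not anticipate any real obstacle: the argument is a routine transfer between the affine and linear pictures, with the only substantive point being the short dimension count above that rules out any strictly larger intersection, thereby matching the hypotheses of Proposition 3.7 exactly.
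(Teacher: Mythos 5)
Your proof is correct and follows essentially the same route the paper intends: the corollary is stated as an immediate consequence of Proposition 3.7 via the standard identification of affine $d$-planes in $\mathbb R^m$ with $(d+1)$-dimensional linear subspaces of $\mathbb R^{m+1}$, which is exactly your reduction. The short dimension count forcing $\dim(V^{2r+2}\cap V_i^{n_i+1})=r+1$ is the right (and only) point needing explicit verification, and you handle it correctly.
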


\begin{pro}
Suppose the intersection of any two of the subspaces $V_1^{n_1},
V_2^{n_2}, V^{n_3}_3\subset V^m$ is the zero vector. If
$V_1^{n_1}+V_2^{n_2}+V^{n_3}_3=V^m$ and $0\leq r\leq n_1+n_2+n_3-m$,
then $\displaystyle\dim G_{m,2r;V^{n_1}_1,\geq r;V^{n_2}_2,\geq
r;V^{n_3}_3,\geq r}=(n_1+n_2+n_3-m-r)r$.
\end{pro}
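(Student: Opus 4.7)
The plan is to produce a homeomorphism between $\displaystyle G_{m,2r;V^{n_1}_1,\geq r;V^{n_2}_2,\geq r;V^{n_3}_3,\geq r}$ and the Grassmannian of all $r$-dimensional subspaces of $\bigl(V^{n_1}_1\oplus V^{n_2}_2\bigr)\cap V^{n_3}_3$, which by the hypotheses has dimension $e:=n_1+n_2+n_3-m\geq r$. Since this Grassmannian has dimension $(e-r)r$, that identification will immediately yield the claimed value $(n_1+n_2+n_3-m-r)r$.

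First I would sharpen the ``$\geq r$'' conditions to equalities. For any $V^{2r}$ in our set and indices $i\neq j$, the intersection $\bigl(V^{2r}\cap V^{n_i}_i\bigr)\cap\bigl(V^{2r}\cap V^{n_j}_j\bigr)$ lies in $V^{n_i}_i\cap V^{n_j}_j=\{0\}$, so the sum $\bigl(V^{2r}\cap V^{n_i}_i\bigr)+\bigl(V^{2r}\cap V^{n_j}_j\bigr)$ is direct, of dimension at least $2r$, and contained in $V^{2r}$. Hence each of the three intersections is \emph{exactly} $r$-dimensional, and $V^{2r}$ equals the direct sum of any two of them. In particular $V^{2r}\subset V^{n_1}_1\oplus V^{n_2}_2$, so the assignment $\varphi(V^{2r})=V^{2r}\cap V^{n_3}_3$ lands inside $\bigl(V^{n_1}_1\oplus V^{n_2}_2\bigr)\cap V^{n_3}_3$.

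Next I would invert $\varphi$ using Proposition 3.7. Given an $r$-plane $V^r\subset\bigl(V^{n_1}_1\oplus V^{n_2}_2\bigr)\cap V^{n_3}_3$, the three subspaces $V^{n_1}_1,V^{n_2}_2,V^r$ are pairwise disjoint except at $0$, since $V^r\subset V^{n_3}_3$ is skew to both $V^{n_1}_1$ and $V^{n_2}_2$. Proposition 3.7 then produces a unique $W$ of dimension $2r$ containing $V^r$ with $\dim(W\cap V^{n_i}_i)=r$ for $i=1,2$, explicitly $W=(V^r+V^{n_1}_1)\cap(V^r+V^{n_2}_2)$. A dimension count gives $\dim(W\cap V^{n_3}_3)=r$: it is at least $r$ because $V^r\subset W\cap V^{n_3}_3$, and it cannot exceed $r$, for otherwise the direct sum $(W\cap V^{n_1}_1)\oplus(W\cap V^{n_3}_3)\subset W$ would have dimension greater than $2r$. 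Thus $W\cap V^{n_3}_3=V^r$, so $V^r\mapsto W$ is a two-sided inverse of $\varphi$.

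It remains to verify that this inverse is continuous, which is the one delicate point. For every $V^r$ in the parametrizing Grassmannian the dimensions $\dim(V^r+V^{n_i}_i)=r+n_i$ and $\dim(V^r+V^{n_1}_1+V^{n_2}_2)=n_1+n_2$ are forced by the pairwise skewness of $V^{n_1}_1,V^{n_2}_2,V^{n_3}_3$ together with the containment $V^r\subset V^{n_1}_1\oplus V^{n_2}_2$, so the formula $V^r\mapsto(V^r+V^{n_1}_1)\cap(V^r+V^{n_2}_2)$ is continuous throughout. Hence $\varphi$ is a homeomorphism, and $\displaystyle\dim G_{m,2r;V^{n_1}_1,\geq r;V^{n_2}_2,\geq r;V^{n_3}_3,\geq r}=(e-r)r=(n_1+n_2+n_3-m-r)r$, as required.
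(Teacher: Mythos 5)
Your proposal is correct and follows essentially the same route as the paper: both identify the set with the Grassmannian $G_{W,r}$ of $r$-dimensional subspaces of $W=\bigl(V_1^{n_1}\oplus V_2^{n_2}\bigr)\cap V_3^{n_3}$, using Proposition 3.7 to build the inverse correspondence $V^r\mapsto(V^r+V_1^{n_1})\cap(V^r+V_2^{n_2})$. You merely spell out details the paper leaves implicit (the reduction of ``$\geq r$'' to ``$=r$'', the check that $W'\cap V_3^{n_3}=V^r$, and continuity), so the two arguments coincide in substance.
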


\begin{proof}
According to Remark 3.5, the set $\displaystyle
G_{m,2r;V^{n_1}_1,\geq r;V^{n_2}_2,\geq r;V^{n_3}_3,\geq r}$
coincide with $\displaystyle
G_{m,2r;V^{n_1}_1,r;V^{n_2}_2,r;V^{n_3}_3,r}$. So, we need to find
the dimension of the last set. Let $W=\big(V_1^{n_1}\oplus
V_2^{n_2}\big)\cap V_3^{n_3}$. Then $\displaystyle\dim
W=\dim\big(V_1^{n_1}\oplus V_2^{n_2}\big)+\dim
V_3^{n_3}-\dim\big(V_1^{n_1}+V_2^{n_2}+V^{n_3}_3\big)=n_1+n_2+n_3-m\geq
r$. By Proposition 3.7, $\displaystyle
G_{m,2r;V^{n_1}_1,r;V^{n_2}_2,r;V^{n_3}_3,r}$ is homeomorphic to
$G_{W,r}$. Therefore, $\displaystyle\dim
G_{m,2r;V^{n_1}_1,r;V^{n_2}_2,r;V^{n_3}_3,r}=\dim
G_{W,r}=(n_1+n_2+n_3-m-r)r$.
\end{proof}

\begin{rem}
We can suppose that Proposition $3.9$ is also true provided
$r>n_1+n_2+n_3-m$ because in this case $\displaystyle
G_{m,2r;V^{n_1}_1,\geq r;V^{n_2}_2,\geq r;V^{n_3}_3,\geq
r}=\varnothing$.
\end{rem}

\begin{cor}
Suppose any two of the planes $\Pi_1^{n_1}, \Pi_2^{n_2},
\Pi^{n_3}_3\subset \mathbb R^m$ are skew. If
$\Pi\big(\Pi_1^{n_1}\cup\Pi_2^{n_2}\cup\Pi^{n_3}_3\big)=\mathbb R^m$
and $m\leq n_1+n_2+n_3+1-r$, then the dimension of the set
$\{\Pi^{2r+1}\subset\mathbb
R^m:\dim\big(\Pi^{2r+1}\cap\Pi^{n_i}_i\big)\geq r, i=1,2,3\}$ is
$\leq (n_1+n_2+n_3+1-m-r)(r+1)$.
\end{cor}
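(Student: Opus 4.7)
The plan is to reduce this affine statement to the linear case covered by Proposition~3.9, using the identification (already exploited earlier in the section) that embeds $\mathbb R^m$ as the affine hyperplane $\{x_{m+1}=1\}\subset\mathbb R^{m+1}$ and lifts each affine $d$-plane $\Pi^d\subset\mathbb R^m$ to the $(d+1)$-dimensional linear subspace it spans in $\mathbb R^{m+1}$. Let $V^{n_i+1}_i\subset\mathbb R^{m+1}$ denote the linear subspace corresponding to $\Pi^{n_i}_i$ for $i=1,2,3$, and let $V^{2r+2}\subset\mathbb R^{m+1}$ be the subspace corresponding to a candidate $(2r+1)$-plane $\Pi^{2r+1}\subset\mathbb R^m$. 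The condition $\dim(\Pi^{2r+1}\cap\Pi^{n_i}_i)\geq r$ becomes $\dim(V^{2r+2}\cap V^{n_i+1}_i)\geq r+1$.

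Next, I would translate the hypotheses. Pairwise skewness $\dim\Pi(\Pi^{n_i}_i\cup\Pi^{n_j}_j)=n_i+n_j+1$ lifts to $V^{n_i+1}_i\cap V^{n_j+1}_j=\{0\}$, since the span in $\mathbb R^{m+1}$ of two disjoint affine planes with affine hull of dimension $n_i+n_j+1$ has linear dimension $n_i+n_j+2=\dim V^{n_i+1}_i+\dim V^{n_j+1}_j$. Similarly, $\Pi(\Pi^{n_1}_1\cup\Pi^{n_2}_2\cup\Pi^{n_3}_3)=\mathbb R^m$ lifts to $V^{n_1+1}_1+V^{n_2+1}_2+V^{n_3+1}_3=\mathbb R^{m+1}$, because any $m+1$ affinely independent points in the union lift to $m+1$ linearly independent vectors in $\mathbb R^{m+1}$. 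The paper's inclusion principle then gives
\[
M_{m,2r+1;\Pi^{n_1}_1,\geq r;\Pi^{n_2}_2,\geq r;\Pi^{n_3}_3,\geq r}\subset G_{m+1,2r+2;V^{n_1+1}_1,\geq r+1;V^{n_2+1}_2,\geq r+1;V^{n_3+1}_3,\geq r+1},
\]
so it suffices to bound the dimension of the right-hand side.

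Now I would apply Proposition~3.9 with the substitutions $m\mapsto m+1$, $n_i\mapsto n_i+1$, and $r\mapsto r+1$. Its numerical hypothesis $r+1\leq(n_1+1)+(n_2+1)+(n_3+1)-(m+1)$ is exactly the assumption $m\leq n_1+n_2+n_3+1-r$, and the subspace hypotheses were verified above. The conclusion yields dimension $((n_1+1)+(n_2+1)+(n_3+1)-(m+1)-(r+1))(r+1)=(n_1+n_2+n_3+1-m-r)(r+1)$, as desired. There is no real obstacle here; the only point requiring care is the translation of affine skewness and the affine-hull condition into the corresponding linear conditions in $\mathbb R^{m+1}$, but both follow from the standard observation that vectors of the form $(x,1)\in\mathbb R^{m+1}$ are linearly independent precisely when the underlying points in $\mathbb R^m$ are affinely independent.
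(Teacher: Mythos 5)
Your proposal is correct and follows exactly the route the paper intends: Corollary 3.11 is stated immediately after Proposition 3.9 and Remark 3.10 and is obtained, just as Corollaries 3.3 and 3.6 are, from the inclusion of the affine configuration space into the corresponding Grassmannian set $G_{m+1,2r+2;n_1+1,\geq r+1;n_2+1,\geq r+1;n_3+1,\geq r+1}$ followed by the dimension count of Proposition 3.9 with the shifted parameters. Your careful translation of the skewness and affine-hull hypotheses into the linear hypotheses of Proposition 3.9 fills in details the paper leaves implicit, and the arithmetic matches.
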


Recall that a real number $v$ is called algebraically dependent on
the real numbers $u_1,..,u_k$ if $v$ satisfies the equation
$p_0(u)+p_1(u)v+...+p_n(u)v^n=0$, where $p_0(u),..,p_n(u)$ are
polynomials in $u_1,..,u_k$ with rational coefficients, not all of
them 0. A finite set of real numbers is {\em algebraically
independent} if none of them depends algebraically on the others.
The idea to use algebraically independent sets for proving general
position theorems was originated by Roberts in \cite{r}. This idea
was also applied by Berkowitz and Roy in \cite{br}. A proof of the
Berkowitz-Roy theorem was provided by Goodsell in \cite[Theorem
A.1]{g2} (see \cite[Corollary 1.2]{bv} for a generalization of the
Berkowitz-Roy theorem and \cite{g1} for another application of this
theorem). Let us note that any finitely many points in an Euclidean
space $\mathbb R^n$ whose set of coordinates is algebraically
independent are in general position.

It is well known (see for example \cite{cr}) that any hyperboloid of
one sheet $\mathrm H$ in $\mathbb R^3$ is {\em doubly ruled}. This
means that through every one of its points there are two distinct
lines that lie on $\mathrm H$. So, there are two families of lines
on $\mathrm H$ (we call them family I and family II) such that any
two lines on $\mathrm H$ are skew iff they belong to the same
family.

\begin{pro}
For any six points $A_i$, $i=1,..,6$, from $\mathbb R^3$ whose set
of coordinates is algebraically independent there exists a
hyperboloid of one sheet $\mathrm H$ such that:
\begin{itemize}
\item[(a)] The lines $\Pi^1_1=A_1A_2$, $\Pi^1_2=A_3A_4$ and $\Pi^1_3=A_5A_6$ lie on $\mathrm H$ and
belong to one family, say family I;
\item[(b)] If a line $\Pi^1\subset\mathbb R^3$ meets each $\Pi^1_i$, $i=1,2,3$, then
$\Pi^1\subset\mathrm H$ and $\Pi^1$ belongs to family II;
\item[(c)] $\mathrm H$ has an equation whose coefficients are
polynomials in the coordinates of $A_i$, $i=1,..,6$, with rational
coefficients.
\end{itemize}
\end{pro}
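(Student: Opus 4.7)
The plan is to build $\mathrm{H}$ by the classical ``three-skew-lines'' construction of a ruled quadric, obtaining polynomiality of its coefficients by solving a linear system. First I would observe that algebraic independence of the coordinates of $A_1,\dots,A_6$ forces the six points to be in general position in $\mathbb R^3$; in particular no three are collinear and no four are coplanar, so the three lines $\Pi^1_1=A_1A_2$, $\Pi^1_2=A_3A_4$, $\Pi^1_3=A_5A_6$ are pairwise skew.

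Next I would write a general quadric polynomial $Q(x,y,z)=\sum_{i\le j}a_{ij}x_ix_j+\sum_i b_ix_i+c$ with $10$ unknown coefficients, parametrize each $\Pi^1_i$ linearly in a single variable, and substitute into $Q$; the resulting quadratic in the parameter has three coefficients, each a linear form in $(a_{ij},b_i,c)$ whose coefficients are polynomials with rational coefficients in the coordinates of $A_{2i-1},A_{2i}$. The three lines thus yield $9$ linear equations in $10$ unknowns. I would check that under algebraic independence this $9\times 10$ matrix has rank $9$ (any drop in rank would force a polynomial identity in the coordinates of the $A_i$), so the solution space is one-dimensional; by Cramer's rule, a nonzero solution can be written with coordinates given by $9\times 9$ minors of this matrix, which are polynomials in the coordinates of $A_1,\dots,A_6$ with rational coefficients. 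This yields (c), and the surface $\mathrm{H}=\{Q=0\}$ contains $\Pi^1_1,\Pi^1_2,\Pi^1_3$.

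To obtain (a) I would verify that $\mathrm{H}$ is non-degenerate and doubly ruled, hence a hyperboloid of one sheet: reducibility $Q=L_1L_2$ would force the three skew lines to lie in the union of two planes, imposing algebraic relations on the $A_i$ contrary to algebraic independence; degeneration to a hyperbolic paraboloid (tangency to the plane at infinity) is likewise a polynomial condition in the coefficients of $Q$, and thus in the coordinates of the $A_i$, so is also excluded. Then $\Pi^1_1,\Pi^1_2,\Pi^1_3$ lie in one of the two rulings, which we label family~I. For (b), any line $\Pi^1\subset\mathbb R^3$ meeting each $\Pi^1_i$ has at least three points on $\mathrm{H}$ and therefore lies on $\mathrm{H}$; since two lines in the same family of a hyperboloid of one sheet are always skew, $\Pi^1$ cannot belong to family~I and must belong to family~II.

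The main obstacle will be the verifications in the third paragraph: showing that the $9\times 10$ linear system has rank exactly $9$, and that the resulting non-degenerate quadric is specifically a hyperboloid of one sheet rather than a hyperbolic paraboloid or a pair of planes. Both conclusions rest on the observation that each excluded scenario is cut out by a polynomial equation in the coordinates of $A_1,\dots,A_6$, which is incompatible with the hypothesis that these coordinates are algebraically independent; beyond this general principle, the remaining work is routine linear algebra over $\mathbb Q(A_1,\dots,A_6)$.
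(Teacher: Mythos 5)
Your proposal is correct and follows essentially the same route as the paper: the paper likewise reduces (c) to a $9\times 10$ linear system for the ten quadric coefficients (six conditions $A_i\in\mathrm H$ plus three asymptotic-direction conditions for the lines, which is your system reorganized) and invokes Cramer's rule, and it proves (b) by the same three-intersection-points argument together with the fact that lines in one ruling are pairwise skew. The only real difference is that the paper cites Courant--Robbins for the existence of the doubly ruled hyperboloid of one sheet through three pairwise skew lines not all parallel to a common plane, whereas you rederive existence and non-degeneracy from the linear system via the algebraic-independence principle.
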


\begin{proof}
Since the set of all coordinates of $A_i$, $i=1,..,6$,
is algebraically independent, the following conditions hold:
\begin{itemize}
\item[(6)] there is no 2-dimensional plane in $\mathbb R^3$ containing four of the points $A_i$, $i=1,..,6$;
\item[(7)] there is no 2-dimensional plane in $\mathbb R^3$ parallel to each line $\Pi^1_i$, $i=1,2,3$.
\end{itemize}
Then, according to \cite{cr}, there exists a hyperboloid of one
sheet $\mathrm H$ containing the lines $\Pi^1_i$, $i=1,2,3$. Since
conditions (6) and (7) imply that any two of the lines $\Pi^1_i$,
$i=1,2,3$, are skew, all they belong to one family, say family I.

If a line $\Pi^1\subset\mathbb R^3$ meets each $\Pi^1_i$, $i=1,2,3$,
then $\Pi^1$ has three common points with $\mathrm H$. So,
$\Pi^1\subset\mathrm H$. Moreover, $\Pi^1$ belongs to family II
because each $\Pi^1_i$ belongs to family I.

To prove item (c), observe that the general equation of the
quadratic surface $\mathrm H$ has 10 coefficients $a_{j}$,
$j=1,..,10$. Since $A_i\in\mathrm H$, for each $i=1,2,..,6$ we
obtain a linear with respect to $a_{j}$ equation with coefficients
$c_j^i$, $1\leq j\leq 10$, such that any $c_j^i$ is a polynomial in
the coordinates of $A_i$ with coefficients $1$ or $-1$. Moreover,
$\Pi^1_i\subset\mathrm H$, $i=1,2,3$, yields that each of the
vectors $\overrightarrow{A_1A_2}$, $\overrightarrow{A_3A_4}$ and
$\overrightarrow{A_5A_6}$ has an asymptotic direction. In this way,
there are another three linear with respect to $a_{j}$ equations
whose coefficients are polynomials in the coordinates of $A_i$,
$i=1,2,..,6$ with rational coefficients. So, we have a linear system
of nine equations with respect to $a_{j}$, $j=1,..,10$. The system
has a unique (up to proportions) a non-zero solution. According to
Cramer's rule, this solution can be expressed by rational functions
of the coefficients of the equations. Finally, the proof of (c)
follows from the fact that each of the system's coefficients are
polynomials in the coordinates of $A_i$, $i=1,2,..,6$, with rational
coefficients.
\end{proof}

\begin{cor} Let $\{A_1,...,A_8\}$ be eight points in $\mathbb R^3$ whose set of coordinates
is algebraically independent. Then at most two lines in $\mathbb
R^3$ meets each of the segments $[A_1,A_2]$, $[A_3,A_4]$,
$[A_5,A_6]$ and $[A_7,A_8]$.
\end{cor}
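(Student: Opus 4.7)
The plan is to combine Proposition~3.11 with a standard ruled-quadric count. Since the 24 coordinates of $A_1,\ldots,A_8$ are algebraically independent, so are the 18 coordinates of $A_1,\ldots,A_6$, and I would apply Proposition~3.11 to these first six points to obtain a hyperboloid of one sheet $\mathrm H$ carrying the three lines $\Pi^1_1=A_1A_2$, $\Pi^1_2=A_3A_4$, $\Pi^1_3=A_5A_6$ in a common family, say family~I. Any line in $\mathbb R^3$ that meets all four segments meets in particular each of these three lines, so by item~(b) of Proposition~3.11 it lies on $\mathrm H$ and belongs to family~II. The problem thereby reduces to bounding the number of family~II rulings of $\mathrm H$ that meet the line $\Pi^1_4=A_7A_8$.

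The next step is to show that $\Pi^1_4$ is not contained in $\mathrm H$. By Proposition~3.11(c), the quadratic equation $Q(x,y,z)=0$ defining $\mathrm H$ has coefficients that are polynomials in the coordinates of $A_1,\ldots,A_6$ with rational coefficients. Substituting the coordinates of $A_7$ yields $Q(A_7)$, a polynomial with rational coefficients in the $21$ coordinates of $A_1,\ldots,A_7$. This polynomial is not identically zero: one can specialize the coordinates of $A_1,\ldots,A_6$ to concrete algebraically independent values for which Proposition~3.11 produces an explicit hyperboloid (for instance $x^2+y^2-z^2=1$) and then pick a seventh point off it. Algebraic independence of our $21$ coordinates therefore forces $Q(A_7)\neq 0$; the same reasoning gives $Q(A_8)\neq 0$. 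Hence $A_7,A_8\notin\mathrm H$, and in particular $\Pi^1_4\not\subset\mathrm H$.

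Because $\mathrm H$ is a quadric and $\Pi^1_4\not\subset\mathrm H$, the intersection $\Pi^1_4\cap\mathrm H$ contains at most two points. Any family~II ruling $\Pi^1$ meeting $\Pi^1_4$ intersects it at a point of $\Pi^1_4\cap\mathrm H$, and through each point of $\mathrm H$ passes exactly one line of family~II. Consequently at most two family~II lines meet $\Pi^1_4$, and so at most two lines in $\mathbb R^3$ meet all four segments. The main obstacle in this plan is the middle paragraph: one must verify carefully that the polynomial $Q(A_7)$ obtained from Proposition~3.11(c) is genuinely nonzero as a formal polynomial before invoking algebraic independence. Once this nontriviality is pinned down, the remainder is a short geometric count on the ruled quadric.
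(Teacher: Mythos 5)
Your argument is correct and follows essentially the same route as the paper's proof of this corollary: apply the hyperboloid proposition to $A_1,\ldots,A_6$, note that any line meeting the first three segments is a family~II ruling of $\mathrm H$, use item (c) together with algebraic independence to conclude $A_7,A_8\notin\mathrm H$, and finish by counting the at most two points of $(A_7A_8)\cap\mathrm H$. The only difference is that you explicitly flag (and discharge) the need to check that $Q(A_7)$ is a nonzero formal polynomial before invoking algebraic independence --- a point the paper passes over silently --- and this can be settled even more directly by observing that $\mathrm H$ is a genuine quadric, so not all of its coefficients vanish at the given $A_1,\ldots,A_6$.
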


\begin{proof}
Consider a hyperboloid of one sheet $\mathrm H$ satisfying
Proposition 3.12. Suppose a line $\Pi^1\subset\mathbb R^3$ meets
each segment $[A_1A_2]$, $[A_3A_4]$ and $[A_5A_6]$. Then
$\Pi^1\subset\mathrm H$ and $\Pi^1$ belongs to family II. Since the
equation of $\mathrm H$ has coefficients which are polynomials with
rational coefficients in the coordinates of the points $A_i$,
$i=1,..,6$, the coordinates of $A_7$ and $A_8$ don't satisfy the
equation of $\mathrm H$. So, both $A_7$ and $A_8$ are outside
$\mathrm H$. Then the line $A_7A_8$ has at most two common points
with $\mathrm H$. Because there exists exactly one line from family
II passing through a given point of $\mathrm H$, we can have at most
two lines from family II meeting the line $A_7A_8$. This complete
the proof of Corollary 3.13.
\end{proof}

\section{Proof of Theorem 1.1}

We are going first to prove Theorem 1.1(a). In this case we have to
show that the set $\mathcal H(3,1,m,3n+1-m)$ of all maps $g\in
C(X,\mathbb R^m)$ such that $\dim B_{3,1,m}(g)\leq 3n+1-m$ is dense
and $G_\delta$ in $C(X,\mathbb R^3)$. Fix a countable family
$\mathcal B$ of closed subsets of $X$ such that the interiors of its
elements is a base for $X$. Since $\mathcal H(3,1,m,3n+1-m)$ is the
intersection of the open family $\{\mathcal H_\Gamma
(3,1,m,3n+1-m,1/k): k\geq 1\}$ (see the proof of Corollary 2.2), it
suffices to show that each $\mathcal H_\Gamma
(3,1,m,3n+1-m,\epsilon)$ is dense in $C(X,\mathbb R^3)$ (recall that
if $\Gamma=\{B_1,B_2,B_3\}\subset\mathcal B$ is a disjoint family of
three elements, then $\mathcal H_\Gamma (3,1,m,3n+1-m,\epsilon)$
consists of all maps $g\in C(X,\mathbb R^m)$ such that
$B_\Gamma(g,m,1)$ can be covered by an open in $M_{m,1}$ family
$\omega$ with $\mathrm{mesh}(\omega)<\epsilon$) and order $\leq
3n+1-m$).

To this end, observe that each map $g\in C(X,\mathbb R^m)$ can be
approximated by maps $f=h\circ p$ with $p\colon X\to K$ and $h\colon
K\to\mathbb R^m$, where $K$ is a finite polyhedron of dimension
$\leq n$. Actually, $K$ can be supposed to be a nerve of a finite
open cover $\beta$ of $X$. Moreover, if we choose $\beta$ such that
any its element meets at most one element of $\Gamma$, then we have
$p(B_i)\cap p(B_j)=\varnothing$ for $i\neq j$. Further, taking
sufficiently small barycentric subdivision of $K$, we can find
disjoint subpolyhedra $K_i$ of $K$ with $p(B_i)\subset K_i$,
$i=1,2,3$. Obviously, $B_\Gamma(h\circ p,m,1)$ is contained in the
set $B_\Lambda(h,m,1)=\{\Pi^1\in M_{m,1}:h^{-1}(\Pi^1)\cap
K_i\neq\varnothing, i=1,2,3\}$, where $\Lambda$ is the family
$\{K_1,K_2,K_3\}$. Therefore, the density of $\mathcal H_\Gamma
(3,1,m,3n+1-m,\epsilon)$ in $C(X,\mathbb R^m)$ is reduced to show
that the maps $h\in C(K,\mathbb R^m)$ such that $B_\Lambda(h,m,1)$
is covered by an open family in $M_{m,1}$ of
$\mathrm{mesh}<\epsilon$ and order $\leq 3n+1-m$ form a dense subset
of $C(K,\mathbb R^m)$. And this follows from the next proposition.

\begin{pro}
Let $K_i$, $i=1,2,3$, be disjoint at most $n$-dimensional
subpolyhedra of a finite polyhedron $K$ and $m\geq 2n+1$. Then the
maps $h\in C(K,\mathbb R^m)$ such that $\dim B_\Lambda(h,m,1)\leq
3n+1-m$ form a dense subset of $C(K,\mathbb R^m)$, where
$\Lambda=\{K_1,K_2,K_3\}$.
\end{pro}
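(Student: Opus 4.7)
The plan is to approximate any $h \in C(K,\mathbb R^m)$ by a piecewise linear map $h'$ whose vertex images have algebraically independent coordinates, then decompose $B_\Lambda(h',m,1)$ as a finite union indexed by triples of simplices and bound each piece using Corollary 3.11 (or its emptiness extension).

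Fix $\epsilon > 0$ and triangulate $K$ by a simplicial complex $T$ fine enough that $h$ varies by less than $\epsilon/2$ on each simplex and such that each $K_i$ is a subcomplex of $T$. For every vertex $v$ of $T$ I would choose $h'(v)$ within $\epsilon/2$ of $h(v)$ so that the set of all coordinates of all the points $h'(v)$ is algebraically independent over $\mathbb Q$; such a perturbation is generic. Extending $h'$ affinely over each simplex yields a map $\epsilon$-close to $h$. Since each $K_i$ is a finite subcomplex, one has the decomposition
$$B_\Lambda(h',m,1)=\bigcup_{(\sigma_1,\sigma_2,\sigma_3)} N(\sigma_1,\sigma_2,\sigma_3),$$
where $\sigma_i$ ranges over the simplices of $K_i$ and $N(\sigma_1,\sigma_2,\sigma_3)=\{\Pi^1\in M_{m,1}:\Pi^1\cap h'(\sigma_i)\neq\varnothing,\ i=1,2,3\}$. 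Each summand is closed in $M_{m,1}$, so by the finite sum theorem for $\dim$ it suffices to show $\dim N(\sigma_1,\sigma_2,\sigma_3)\leq 3n+1-m$ for every triple.

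Fix such a triple and set $d_i=\dim\sigma_i\leq n$, $\Pi_i=\Pi(h'(\sigma_i))$. Algebraic independence of the vertex coordinates guarantees that each $h'(\sigma_i)$ is a non-degenerate $d_i$-simplex, that the affine hulls $\Pi_1,\Pi_2,\Pi_3$ are pairwise skew (possible since $d_i+d_j\leq 2n\leq m-1$), and that $\dim\Pi(\Pi_1\cup\Pi_2\cup\Pi_3)=\min(m,\,d_1+d_2+d_3+2)$. If $d_1+d_2+d_3\geq m-1$, the joint affine hull fills $\mathbb R^m$ and Corollary 3.11 with $r=0$ applies, giving
$$\dim N(\sigma_1,\sigma_2,\sigma_3)\;\leq\;d_1+d_2+d_3+1-m\;\leq\;3n+1-m.$$
Otherwise $d_1+d_2+d_3\leq m-2$, the planes are jointly skew, and any line $\Pi^1$ meeting each $\Pi_i$ at a point $p_i$ has the $p_i$ distinct (pairwise disjointness) and collinear, so $\Pi^1$ is the unique line through them and lies in the joint affine hull $\Pi'$ of dimension $d_1+d_2+d_3+2$. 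Inside $\Pi'$ the three planes span $\Pi'$ but the dimension hypothesis of Corollary 3.11 fails by one; the emptiness extension of Proposition 3.9 recorded in Remark 3.10 then forces $N(\sigma_1,\sigma_2,\sigma_3)=\varnothing$.

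The main obstacle I expect is the second case of the dimension estimate. One must verify carefully that algebraic independence of the vertex coordinates implies pairwise skewness of the three affine hulls \emph{and} that their joint hull has the maximal possible dimension $\min(m,d_1+d_2+d_3+2)$, and then convert the algebraic statement of Remark 3.10 into a genuine emptiness assertion for $N(\sigma_1,\sigma_2,\sigma_3)$ in the regime where Corollary 3.11 does not apply directly. A secondary point to manage is that the perturbation producing $h'$ must be made explicit enough to secure, simultaneously, $\epsilon$-closeness to $h$ and the needed general-position properties for every one of the finitely many triples $(\sigma_1,\sigma_2,\sigma_3)$.
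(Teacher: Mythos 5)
Your proposal is correct and follows essentially the same route as the paper: perturb to a piecewise-linear map whose vertex images have algebraically independent coordinates, reduce to triples of simplices, and bound the resulting sets of lines via Corollary 3.11. You are in fact more careful than the paper on two points it glosses over --- the decomposition over simplex triples with the finite sum theorem (the paper says ``WLOG each $K_i$ is a simplex'') and the case $d_1+d_2+d_3\leq m-2$, where Corollary 3.11 does not literally apply and the set of lines is empty (as your collinearity/Remark 3.10 argument shows).
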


\begin{proof}
Let $h_0\in C(K,\mathbb R^m)$ and $\delta>0$. We take a subdivision
of $K$ such that $\mathrm{diam}h_0(\sigma)<\delta/2$ for all
simplexes $\sigma$. Let $K^{(0)}=\{a_1,a_2,...,a_k\}$ be the
vertexes of $K$ and $v_j=h_0(a_j)$, $j=1,..,k$. Then, by \cite{br},
there are points $b_j\in\mathbb R^m$ such that the distance between
$v_j$ and $b_j$ is $<\delta/2$ for each $j$ and the coordinates of
all $b_j$, $j=1,..,k$, form an algebraically independent set. Define
a map $h\colon K\to\mathbb R^m$ by $h(a_j)=b_j$ and $h$ is linear on
every simplex of $K$. It is easily seen that $h$ is $\delta$-close
to $h_0$. Without loss of generality, we may assume that each $K_i$,
$i=1,2,3$, is a simplex. Since the coordinates of all vertexes $b_j$
form an algebraically independent set, each $h(K_i)$ generates a
plane $\Pi^{n_i}_i\subset\mathbb R^n$ such that $n_i=\dim h(K_i)\leq
n$ and any two of the planes
$\{\Pi^{n_1}_1,\Pi^{n_2}_2,\Pi^{n_3}_3\}$ are skew. Then, by
Corollary 3.11, the set $$A(h)=\{\Pi^1\in
M_{m,1}:\Pi^1\cap\Pi^{n_i}_i\neq\varnothing, i=1,2,3\}$$ is of
dimension $n_1+n_2+n_3+1-m\leq 3n+1-m$. Because
$A(h)=B_\Lambda(h,m,1)$, we have $\dim B_\Lambda(h,m,1)\leq 3n+1-m$.
This completes the proof.
\end{proof}

As above, the proof of the other three items of Theorem 1.1 is
reduced to the proof of the following proposition.

\begin{pro}
Let $K$ be a finite polyhedron. Then we have:
\begin{itemize}
\item[(a)] If $\Lambda=\{K_1,K_2\}$ is a disjoint pair of at most
$n$-dimensional subpolyhedra of $K$ and $m\geq 2n+1$, then the maps
$h\in C(K,\mathbb R^m)$ with $\dim B_\Lambda(h,m,1)\leq 2n$ form a
dense subset of $C(K,\mathbb R^m)$.
\item[(b)] If $\Lambda=\{K_1\}$ and $m\geq n+1$, where $K_1\subset K$ is
a subpolyhedron with $\dim K_1\leq n$, then the maps $h\in
C(K,\mathbb R^m)$ such that $\dim B_\Lambda(h,m,d)\leq n+d(m-d)$
form a dense subset of $C(K,\mathbb R^m)$, where
$B_\Lambda(h,m,d)=\{\Pi^d\in M_{m,d}:h^{-1}(\Pi^d)\cap
K_1\neq\varnothing\}$.
\item[(c)] If $\Lambda=\{K_1,K_2,K_3,K_4\}$ is a disjoint family of at most
$1$-dimensional subpolyhedra of $K$, then the maps $h\in C(K,\mathbb
R^3)$ with $\dim B_\Lambda(h,3,1)\leq 0$ form a dense subset of
$C(K,\mathbb R^3)$.
\end{itemize}
\end{pro}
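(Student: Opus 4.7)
The plan is to adapt the argument for Proposition 4.1 uniformly to all three parts. Given $h_0 \in C(K,\mathbb R^m)$ and $\delta > 0$, I would first take a subdivision of $K$ so fine that $\mathrm{diam}\,h_0(\sigma) < \delta/2$ on every simplex, then invoke the Berkowitz--Roy theorem to perturb the vertex images $h_0(a_j)$ to points $b_j \in \mathbb R^m$ with combined coordinates algebraically independent, and let $h$ be the simplex-wise linear extension; then $h$ is $\delta$-close to $h_0$. Because each $K_i$ is a finite union of simplices, the set $B_\Lambda(h,m,d)$ decomposes as a finite union, indexed by tuples $(\sigma_1,\ldots,\sigma_q)$ of simplices $\sigma_i \subset K_i$, of the sets of $d$-planes meeting each $h(\sigma_i)$. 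By the finite sum theorem for $\dim$, it suffices to bound the dimension of each such piece, and algebraic independence of the vertex coordinates will put all relevant affine hulls into the generic position demanded by the results of Section 3.

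For part (a), fix $\sigma_i \subset K_i$ and let $\Pi_i^{n_i} = \Pi(h(\sigma_i))$, so $n_i \leq \dim\sigma_i \leq n$. Since $K_1 \cap K_2 = \varnothing$ the two simplices have disjoint vertex sets, so algebraic independence forces $\Pi_1^{n_1}$ and $\Pi_2^{n_2}$ to be skew (which is possible because $m \geq 2n+1 \geq n_1+n_2+1$). The piece of $B_\Lambda(h,m,1)$ indexed by $(\sigma_1,\sigma_2)$ lies inside $M_{m,1;n_1,\geq 0;n_2,\geq 0}$, and Corollary 3.6 applied with $r_1=r_2=0$ bounds its dimension by $n_1+n_2 \leq 2n$.

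For part (b), for each simplex $\sigma \subset K_1$ set $\Pi_1^{n_1} = \Pi(h(\sigma))$ with $n_1 \leq n$. The corresponding piece of $B_\Lambda(h,m,d)$ is contained in $M_{m,d;n_1,\geq 0}$, and Corollary 3.3 applied with $r = 0$ gives dimension at most $n_1 + d(m-d) \leq n + d(m-d)$; the inequalities required by that corollary reduce precisely to the hypothesis $m \geq n+d$.

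Part (c) is the main obstacle: a direct application of Corollary 3.11 to three of the four polyhedra (with $r=0$, $n_i \leq 1$, $m=3$) yields only a $1$-dimensional family of lines meeting any three of them, so the fourth constraint must be used to cut this down to $0$-dimensional, and this is where the specifically three-dimensional Corollary 3.13 is unavoidable. For each quadruple $(\sigma_1,\sigma_2,\sigma_3,\sigma_4)$ with $\sigma_i \subset K_i$: if all four $\sigma_i$ are edges, then $h(\sigma_i)$ are four segments whose eight endpoint coordinates are algebraically independent, and Corollary 3.13 bounds the set of common transversals by two points. In the degenerate sub-cases where some $\sigma_i$ is a vertex, I would project from the corresponding image vertex onto a generic plane in $\mathbb R^3$, which reduces the problem to asking how many common points the projected images of the remaining three sets can have; algebraic independence of the original coordinates transfers to the projected segments (or points), keeping them in sufficiently general position that the common intersection is finite. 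In every sub-case the piece is finite, hence $0$-dimensional, and a finite union of $0$-dimensional sets is $0$-dimensional, giving $\dim B_\Lambda(h,3,1) \leq 0$.
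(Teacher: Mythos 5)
Your argument is essentially the paper's: the published proof of this proposition simply repeats the proof of Proposition 4.1 with Corollary 3.11 replaced by Corollary 3.6 (with $r_1=r_2=0$) for (a), Corollary 3.3 (with $r=0$) for (b), and Corollary 3.13 for (c), which is exactly the reduction you carry out, including the decomposition over tuples of simplices and the finite sum theorem. Your extra attention to the degenerate sub-cases of (c) (where some $\sigma_i$ is a vertex) covers a point the paper glosses over, though the projection step is heavier than necessary and its transfer of algebraic independence is not fully justified; a direct argument suffices, since if three of the $\sigma_i$ are edges then Proposition 3.12 puts every common transversal on a hyperboloid $\mathrm{H}$ that misses the remaining image vertex by part (c) of that proposition, while if two or more $\sigma_i$ are vertices the transversal is already determined by those two distinct points, so each degenerate piece is finite.
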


\begin{proof}
The same arguments as in the proof of Proposition 4.1 can be used.
The only difference is that, instead Corollary 3.11, we apply now
Corollary 3.6 (with $r_1=r_2=0$) for item (a), Corollary 3.3 (with
$r=0$) for item (b) and Corollary 3.13 for item (c), respectively.
\end{proof}

\section{Proof of Theorem $1.2$.}

We fix a metric $\rho$ generating the topology of $X$. Let $d\in
[1,m]$ and $q\geq 1$ be integers, $g\in C(X,\mathbb R^m)$, $y\in Y$
and $\eta>0$.  We define $\displaystyle B_{q,d,m}^\eta(g,y)$ to be
the set of all $\Pi^d\in M_{m,d}$ such that there exist $q$ points
$x^i\in g^{-1}(\Pi^d)\cap f^{-1}(y)$, $i=1,..,q$, with
$\rho(x^i,x^j)\geq\eta$ for all $i\neq j$. Obviously, $\displaystyle
B_{q,d,m}^\eta(g,y)\subset B_{q,d,m}(g|f^{-1}(y))$ and
$\displaystyle
B_{q,d,m}(g|f^{-1}(y))=\bigcup_{k=1}^{\infty}B_{q,d,m}^{1/k}(g,y)$.

\begin{lem}
Each $\displaystyle B_{q,d,m}^\eta(g,y)$ is closed in $\displaystyle
B_{q,d,m}(g|f^{-1}(y))$.
\end{lem}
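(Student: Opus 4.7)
The plan is to prove closedness by a straightforward sequential argument exploiting the compactness of $f^{-1}(y)$ and a metric topology trick on $M_{m,d}$ analogous to the one used in Proposition~2.1.

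First, I would take a sequence $\{\Pi_n^d\}_{n\geq 1}\subset B_{q,d,m}^\eta(g,y)$ converging in $M_{m,d}$ to some $\Pi_0^d\in B_{q,d,m}(g|f^{-1}(y))$, and select for each $n$ witnesses $x_n^1,\dots,x_n^q\in g^{-1}(\Pi_n^d)\cap f^{-1}(y)$ with $\rho(x_n^i,x_n^j)\geq\eta$ for $i\neq j$. Since $f$ is perfect, $f^{-1}(y)$ is compact, so after passing to a subsequence we may assume each $\{x_n^i\}_{n\geq 1}$ converges to a point $x_0^i\in f^{-1}(y)$. Continuity of the metric immediately gives $\rho(x_0^i,x_0^j)\geq\eta$ for $i\neq j$, so the limit points are pairwise distinct; this is the easy half.

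The substantive step is to show $g(x_0^i)\in\Pi_0^d$ for every $i$. For this I would adapt the compact-ball argument from Proposition~2.1: choose a closed ball $K\subset\mathbb R^m$ containing the compact set $\bigcup_{i\leq q}g(f^{-1}(y))\cup\{g(x_n^i):n\geq 1,\;i\leq q\}$ in its interior (which exists by continuity of $g$ and compactness of $f^{-1}(y)$). Each $\Pi_n^d$ intersects $K$, so convergence of $\Pi_n^d$ to $\Pi_0^d$ in $M_{m,d}$ translates into convergence of $\Pi_n^d\cap K$ to $\Pi_0^d\cap K$ in the Vietoris topology on $\exp(K)$. Because $g(x_n^i)\in\Pi_n^d\cap K$ and $g(x_n^i)\to g(x_0^i)$ by continuity of $g$, the limit $g(x_0^i)$ lies in $\Pi_0^d\cap K\subset\Pi_0^d$.

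Putting these together, $\Pi_0^d$ has $q$ preimages $x_0^1,\dots,x_0^q\in f^{-1}(y)$ with pairwise $\rho$-distance at least $\eta$, so $\Pi_0^d\in B_{q,d,m}^\eta(g,y)$, proving closedness in $B_{q,d,m}(g|f^{-1}(y))$. The main obstacle is essentially notational, namely the passage from convergence of planes in $M_{m,d}$ to pointwise convergence of incidence, but the ball-intersection device already used in Proposition~2.1 handles it cleanly; the perfectness of $f$ does the rest by supplying compactness of fibres for the diagonal extraction of convergent subsequences.
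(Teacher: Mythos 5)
Your argument is correct and follows essentially the same route as the paper: extract a convergent sequence of planes with their $q$ witnesses, use compactness of the fibre $f^{-1}(y)$ to pass to convergent subsequences, note that the $\eta$-separation survives the limit, and conclude that $g(x_0^i)\in\Pi_0^d$. The only difference is that you spell out the last step via the compact-ball/Vietoris device of Proposition 2.1, which the paper leaves implicit; this is a harmless (indeed welcome) elaboration, not a deviation.
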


\begin{proof}
Suppose we have a sequence $\{\Pi^d_k\}_{k\geq 1}\subset
B_{q,d,m}^\eta(g,y)$ converging in $M_{m,d}$ to a plane $\Pi^d_0$.
Then for every $k$ we have a $q$ points $x_k^i\in
g^{-1}(\Pi^d_k)\cap f^{-1}(y)$, $i=1,..,q$, such that
$\rho(x^i_k,x^j_k)\geq\eta$ for $i\neq j$. Since $f^{-1}(y)$ is a
metric compactum, we can suppose that each sequence
$\{x_k^i\}_{k\geq 1}$ converges to a point $x^i_0\in f^{-1}(y)$.
Then $\lim g(x^i_k)=g(x^i_0)\in\Pi^d_0$, $i=1,..,q$. Moreover,
$\rho(x^i_0,x^j_0)\geq\eta$ for all $i\neq j$. Hence,
$\displaystyle\Pi^d_0\in B_{q,d,m}^\eta(g,y)$.
\end{proof}

Next, if $y\in Y$, $\eta, \epsilon>0$ and $1\leq k$ is an integer,
let $\displaystyle\mathcal P_y^\eta(q,k,d,\epsilon)$ be the set of
all maps $g\in C(X,\mathbb R^m)$ such that $\displaystyle
B_{q,d,m}^\eta(g,y)$ can be covered by an open in $M_{m,d}$ family
of order $\leq k$ and $\mathrm{mesh}<\epsilon$.  If $F\subset Y$, we
consider the set $\displaystyle\mathcal
P_F^\eta(q,k,d,\epsilon)=\bigcap_{y\in F}\mathcal
P_y^\eta(q,k,d,\epsilon)$. Obviously the intersection of all
$\displaystyle\mathcal P_Y^\eta(q,k,d,1/s)$, $s\geq 1$, is the set
$$\displaystyle\mathcal P^\eta(q,k,d)=\{g\in C(X,\mathbb R^m):\dim
B_{q,d,m}^\eta(g,y)\leq k{~}\mbox{for all}{~}y\in Y\}.$$ Moreover,
since $\displaystyle
B_{q,d,m}(g|f^{-1}(y))=\bigcup_{s=1}^{\infty}B_{q,d,m}^{1/s}(g,y)$
and each $\displaystyle B_{q,d,m}^{1/s}(g,y)$ is closed in
$\displaystyle B_{q,d,m}(g|f^{-1}(y))$ (see Lemma 5.1), the
countable sum theorem for the dimension $\dim$ yields that
$\displaystyle\bigcap_{s=1}^{\infty}\mathcal P^{1/s}(q,k,d)$
coincides with the set
$$\displaystyle\mathcal P(q,d,m,k)=\{g\in
C(X,\mathbb R^m):\dim B_{q,d,m}(g|f^{-1}(y))\leq k,y\in Y\}.$$
Therefore, $$\displaystyle\mathcal P(q,d,m,k)=\bigcap\{\mathcal
P^{1/s}_Y(q,k,d,1/p):p\geq 1, s\geq 1\}.$$ So, in order to show that
$\mathcal P(q,d,m,k)$ is dense and $G_\delta$ in $C(X,\mathbb R^m)$,
it suffices to show that each $\displaystyle\mathcal
P^\eta_Y(q,k,d,\epsilon)$ is open and dense in $C(X,\mathbb R^m)$.

We are going first to show that any $\displaystyle\mathcal
P^\eta_Y(q,k,d,\epsilon)$ is open in $C(X,\mathbb R^m)$.

\begin{lem}
Let $\displaystyle g_0\in\mathcal P_{y_0}^\eta(q,k,d,\epsilon)$ for
some $y_0\in Y$. Then there exists a neighborhood $V$ of $y_0$ in
$Y$ and $\delta>0$ such that $\displaystyle g\in\mathcal
P_V^\eta(q,k,d,\epsilon)$ for all $g\in C(X,\mathbb R^m)$ such that
the restrictions $g|f^{-1}(V)$ and $g_0|f^{-1}(V)$ are
$\delta$-close.
\end{lem}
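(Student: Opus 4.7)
The plan is to extend the proof of Proposition 2.1 to the parametric setting, where the role of the compact domain $X$ there is played by the varying fibers $f^{-1}(y)$ for $y$ near $y_0$; the perfectness of $f$ is exactly what allows limits to be extracted across these fibers. Let $\omega$ be an open family in $M_{m,d}$ witnessing $g_0\in\mathcal P_{y_0}^\eta(q,k,d,\epsilon)$, so that $\omega$ covers $B_{q,d,m}^\eta(g_0,y_0)$ with order $\leq k$ and $\mathrm{mesh}(\omega)<\epsilon$; put $W=\bigcup\omega$. I will show there exist a neighborhood $V$ of $y_0$ and $\delta>0$ such that whenever $g\in C(X,\mathbb R^m)$ and $g|f^{-1}(V)$ is $\delta$-close to $g_0|f^{-1}(V)$, we have $B_{q,d,m}^\eta(g,y)\subset W$ for every $y\in V$; the same family $\omega$ then witnesses $g\in\mathcal P_V^\eta(q,k,d,\epsilon)$.

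Suppose, towards a contradiction, that no such $(V,\delta)$ exists. Fixing a decreasing neighborhood base $\{V_n\}$ of $y_0$, we obtain $y_n\in V_n$ with $y_n\to y_0$, maps $g_n$ that are $1/n$-close to $g_0$ on $f^{-1}(V_n)$, and planes $\Pi^d_n\in B_{q,d,m}^\eta(g_n,y_n)\setminus W$. For each $n$ and $i=1,\dots,q$ pick a point $x_n^i\in g_n^{-1}(\Pi^d_n)\cap f^{-1}(y_n)$ with $\rho(x_n^i,x_n^j)\geq\eta$ for $i\neq j$.

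The first key step is to extract accumulation points of $\{x_n^i\}$ inside $f^{-1}(y_0)$, using the following consequence of perfectness: any sequence $\{z_n\}$ with $z_n\in f^{-1}(y_n)$ has a subsequence converging to a point of $f^{-1}(y_0)$. Indeed, if this fails, then using compactness of $f^{-1}(y_0)$ one covers it by finitely many open sets each containing only finitely many $z_n$, obtaining an open $U\supset f^{-1}(y_0)$ with $z_n\notin U$ eventually; perfectness of $f$ yields a neighborhood $V'$ of $y_0$ with $f^{-1}(V')\subset U$, but $z_n\in f^{-1}(V')\subset U$ for all large $n$, a contradiction. Applying this claim and a diagonal argument, we may assume after passing to a subsequence that $x_n^i\to x_0^i\in f^{-1}(y_0)$ for every $i$; the separation $\rho(x_0^i,x_0^j)\geq\eta$ passes to the limit, and since $\|g_n(x_n^i)-g_0(x_n^i)\|<1/n$ and $g_0(x_n^i)\to g_0(x_0^i)$, we have $g_n(x_n^i)\to g_0(x_0^i)$.

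The remainder is exactly as in Proposition 2.1: fix a closed ball $K\subset\mathbb R^m$ containing all the $g_0(x_0^i)$ together with the $g_n(x_n^i)$ for large $n$; passing to a further subsequence, $\Pi^d_n\cap K$ converges in $\mathrm{exp}(K)$ to $\Pi^d_0\cap K$ for some $\Pi^d_0\in M_{m,d}$, which forces $\Pi^d_n\to\Pi^d_0$ in $M_{m,d}$ and $g_0(x_0^i)\in\Pi^d_0$ for each $i$. Therefore $\Pi^d_0\in B_{q,d,m}^\eta(g_0,y_0)\subset W$; but $W$ is open, so $\Pi^d_n\in W$ for almost all $n$, contradicting the choice of $\Pi^d_n$. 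The main new ingredient beyond Proposition 2.1 is the perfectness-based extraction of the $x_0^i$ in $f^{-1}(y_0)$, and I expect that to be the most delicate step; everything else mirrors the earlier argument.
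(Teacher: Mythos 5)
Your proof is correct and follows essentially the same route as the paper: reduce to showing $B_{q,d,m}^\eta(g,y)\subset W$ for $y$ near $y_0$, then run the contradiction/subsequence argument of Proposition 2.1. The only difference is that you spell out the perfectness-based extraction of limit points in $f^{-1}(y_0)$, a step the paper compresses into ``as in Proposition 2.1''; your justification of that step is sound.
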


\begin{proof}
Since $\displaystyle g_0\in\mathcal P_{y_0}^\eta(q,k,d,\epsilon)$,
there exists an open in $M_{m,d}$ family $\omega$ of order $\leq k$
and $\mathrm{mesh}(\omega)<\epsilon$ which covers $\displaystyle
B_{q,d,m}^\eta(g_0,y_0)$. Let $W=\bigcup\{U:U\in\omega\}$. It
suffices to show that we can find a neighborhood $V$ of $y_0$ in $Y$
and $\delta>0$ such that $\displaystyle B_{q,d,m}^\eta(g,y)\subset
W$ for all $y\in V$ and all maps $g\in C(X,\mathbb R^m)$ such that
the restrictions $g|f^{-1}(V)$ and $g_0|f^{-1}(V)$ are
$\delta$-close. Suppose this is not true. Then, for each $i=1,..,q$
there exist sequences $\{V_s\}_{s\geq 1}$, $\{y_s\}_{s\geq 1}\subset
Y$, $\{g_s\}_{s\geq 1}\subset C(X,\mathbb R^m)$, $\{\Pi^d_s\}_{s\geq
1}\subset M_{m,d}$ and $\{x_s^i\}_{s\geq 1}\subset X$ satisfying the
following conditions for every $s\geq 1$:
\begin{itemize}
\item $\{V_s\}_{s\geq 1}$ is a local base of neighborhoods at $y_0$;
\item $y_s\in V_s$;
\item $\displaystyle g_s|f^{-1}(V_s)$ and $\displaystyle g_0|f^{-1}(V_s)$ are $(1/s)$-close;
\item $\displaystyle\Pi^d_s\in B_{q,d,m}^\eta(g_s,y_s)\backslash W$;
\item $\displaystyle x_s^i\in g_s^{-1}(\Pi^d_s)\cap f^{-1}(y_s)$ for all $i$;
\item $\displaystyle \rho(x_s^i,x_s^j)\geq\eta$ for all $i\neq j$.
\end{itemize}
As in Proposition 2.1, we can suppose that there exist points
$x^i_0\in f^{-1}(y_0)$, $i=1,..,q$, and a plane $\Pi^d_0\in M_{m,d}$
such that $\lim x_s^i=x_0^i$ and $\lim \Pi^d_s=\Pi^d_0$. It is
easily seen that $\displaystyle\Pi^d_0\in B_{q,d,m}^\eta(g_0,y_0)$
which implies $\Pi^d_0\in W$. This is a contradiction because $\lim
\Pi^d_s=\Pi^d_0$ and $\Pi^d_s\not\in W$.
\end{proof}

\begin{pro}
Any $\displaystyle\mathcal P^\eta_Y(q,k,d,\epsilon)$ is open in
$C(X,\mathbb R^m)$ with respect to the source limitation topology.
\end{pro}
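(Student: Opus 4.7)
The plan is to reduce openness of $\mathcal P^\eta_Y(q,k,d,\epsilon)$ to the local statement of Lemma~5.2 and then invoke $\dim Y=0$ to glue the local thresholds into one positive continuous function on $X$ that witnesses $g_0$ being an interior point in the source-limitation topology. For a fixed $g_0\in\mathcal P^\eta_Y(q,k,d,\epsilon)$, I would first apply Lemma~5.2 at every $y\in Y$ to obtain an open neighborhood $V_y$ of $y$ in $Y$ and a number $\delta_y>0$ such that any $g\in C(X,\IR^m)$ whose restriction $g|f^{-1}(V_y)$ is $\delta_y$-close to $g_0|f^{-1}(V_y)$ belongs to $\mathcal P^\eta_{V_y}(q,k,d,\epsilon)$.

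Since $Y$ is metrizable with $\dim Y=0$, every open cover of $Y$ admits a refinement consisting of pairwise disjoint clopen sets. Applying this to $\{V_y:y\in Y\}$ produces a pairwise disjoint clopen partition $\{W_\lambda\}_{\lambda\in\Lambda}$ of $Y$ with $W_\lambda\subset V_{y(\lambda)}$ for some $y(\lambda)\in Y$, and I set $\delta_\lambda:=\delta_{y(\lambda)}$. Pulling back by the continuous map $f$ yields a pairwise disjoint clopen partition $\{f^{-1}(W_\lambda)\}_{\lambda\in\Lambda}$ of $X$, so the function $\alpha\colon X\to(0,\infty)$ defined by $\alpha(x)=\delta_\lambda$ whenever $f(x)\in W_\lambda$ is locally constant and hence continuous. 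The source-limitation basic neighborhood $U(g_0,\alpha)=\{g\in C(X,\IR^m):|g(x)-g_0(x)|<\alpha(x)\text{ for every }x\in X\}$ is then open and contains $g_0$, and for any $g\in U(g_0,\alpha)$ and any $\lambda$ the restriction $g|f^{-1}(W_\lambda)$ is $\delta_\lambda$-close to $g_0|f^{-1}(W_\lambda)$; Lemma~5.2 thus yields $g\in\mathcal P^\eta_{W_\lambda}(q,k,d,\epsilon)$, and since $\{W_\lambda\}$ partitions $Y$ we conclude $g\in\mathcal P^\eta_Y(q,k,d,\epsilon)$, so $U(g_0,\alpha)\subset\mathcal P^\eta_Y(q,k,d,\epsilon)$.

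The main obstacle I foresee is precisely the step of assembling the pointwise local data $(V_y,\delta_y)$ into a single continuous positive function $\alpha$ on $X$. A locally finite partition-of-unity approach would force $\alpha$ to be some convex combination of the $\delta_y$'s, which is an awkward object when the index set is uncountable and when one needs a clean uniform lower bound. The hypothesis $\dim Y=0$ removes this difficulty by providing a genuinely disjoint clopen refinement, whose pullback along $f$ renders $\alpha$ locally constant and hence trivially continuous; this is the only place in the argument where zero-dimensionality of $Y$ is used.
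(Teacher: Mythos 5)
Your proof is correct, but it takes a different route from the paper at the gluing step. The paper does not invoke $\dim Y=0$ here at all: after obtaining the local data $(V_y,\delta_y)$ from Lemma 5.2, it passes to a locally finite cover, defines the lower semi-continuous convex-valued map $\varphi(y)=\bigcup\{(0,\delta_z]:y\in V_z\}$, and applies the Michael-type selection theorem to get a continuous $\beta\colon Y\to(0,1]$ with $\beta(y)\le\delta_z$ whenever $y\in V_z$; then $\alpha=\beta\circ f$. That argument works for an arbitrary (paracompact) base $Y$, so the proposition as the paper proves it does not consume the zero-dimensionality hypothesis, which is saved for the density part. Your version trades the selection theorem for a disjoint clopen refinement, producing a locally constant $\alpha$; this is more elementary but makes the openness statement genuinely depend on $\dim Y=0$. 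One detail you should patch: Lemma 5.2 is stated with the hypothesis that $g$ and $g_0$ are $\delta_{y(\lambda)}$-close on all of $f^{-1}(V_{y(\lambda)})$, whereas you only control $g$ on the smaller set $f^{-1}(W_\lambda)$. This is harmless, but it needs a word: either observe that membership of $g$ in $\mathcal P_y^\eta(q,k,d,\epsilon)$ depends only on $g|f^{-1}(y)$ (as is clear from the definition of $B^\eta_{q,d,m}(g,y)$), or use that $f^{-1}(W_\lambda)$ is clopen to replace $g$ by the map equal to $g$ on $f^{-1}(W_\lambda)$ and to $g_0$ elsewhere, which satisfies the hypothesis of Lemma 5.2 on $V_{y(\lambda)}$ and agrees with $g$ over each $y\in W_\lambda$. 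With that remark inserted, your argument is complete.
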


\begin{proof}
We follow the arguments from the proof of \cite[Proposition 3.3]{bv}
(see also the proof of \cite[Proposition 2.3]{v}). For every $y\in
Y$ there exists a neighborhood $V_y$ and a positive $\delta_y\leq 1$
satisfying Lemma 5.2. We suppose that the family $\{V_y:y\in Y\}$ is
locally finite. Define a lower semi-continuous convex-valued map
$\varphi\colon Y\to (0,1]$ by $\varphi(y)=\bigcup\{(0,\delta_z]:y\in
V_z\}$. According to \cite[Theorem 6.2, p.116]{rs}, $\varphi$ admits
a continuous selection $\beta\colon Y\to (0,1]$. Let
$\alpha=\beta\circ f$. Using the choice of the neighborhoods $V_y$
it is easily seen that if $\rho_m(g(x),g_0(x))<\alpha(x)$ for all
$x\in X$, where $g\in C(X,\mathbb R^m)$ and $\rho_m$ is the
Euclidean metric on $\mathbb R^m$, then $\displaystyle g\in\mathcal
P^\eta_Y(q,k,d,\epsilon)$. Therefore, $\displaystyle\mathcal
P^\eta_Y(q,k,d,\epsilon)$ is open in $C(X,\mathbb R^m)$.
\end{proof}

\begin{pro}
Suppose $X,Y$ and $f$ satisfy the hypotheses of Theorem $1.2$ and
$\eta,\epsilon>0$. Then the following holds:
\begin{itemize}
\item[(a)] Each of the sets $\displaystyle\mathcal
P^\eta_Y(3,3n+1-m,1,\epsilon)$ and $\displaystyle\mathcal
P^\eta_Y(2,2n,1,\epsilon)$ is dense in $C(X,\mathbb R^m)$ provided
$m\geq 2n+1$.
\item[(b)] The set $\displaystyle\mathcal
P^\eta_Y(1,n+d(m-d),d,\epsilon)$ is dense in $C(X,\mathbb R^m)$
provided $m\geq n+d$.
\item[(c)] The set $\displaystyle\mathcal
P^\eta_Y(4,0,1,\epsilon)$ is dense in $C(X,\mathbb R^3)$ if $n=1$.
\end{itemize}
\end{pro}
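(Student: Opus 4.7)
My plan is to establish density of $\mathcal P^\eta_Y(q,k,d,\epsilon)$ by combining the fiberwise density supplied by Theorem 1.1 with the clopen-partitioning freedom guaranteed by $\dim Y=0$, reducing the parametric problem to a gluing of local approximations. Fix $g_0\in C(X,\mathbb R^m)$ together with a continuous function $\alpha\colon X\to(0,1]$ defining a basic neighborhood of $g_0$ in the source limitation topology. I seek $g\in C(X,\mathbb R^m)$ with $\rho_m(g(x),g_0(x))<\alpha(x)$ everywhere and $g\in\mathcal P^\eta_Y(q,k,d,\epsilon)$ for the triple $(q,k,d)$ appropriate to each of the four cases (each handled uniformly by invoking the corresponding item of Theorem 1.1).

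For the local step, fix $y\in Y$. The fiber $f^{-1}(y)$ is a compactum of dimension at most $n$, so the relevant item of Theorem 1.1 produces $h_y\colon f^{-1}(y)\to\mathbb R^m$ with $\rho_m(h_y,g_0|f^{-1}(y))<\mu_y$ and $\dim B_{q,d,m}(h_y)\leq k$, where $\mu_y=\min\{\alpha(x)/2:x\in f^{-1}(y)\}>0$. Extend $h_y$ coordinatewise by Tietze to $\tilde h_y\in C(X,\mathbb R^m)$, and note that $U_y=\{x\in X:\rho_m(\tilde h_y(x),g_0(x))<\alpha(x)\}$ is an open neighborhood of $f^{-1}(y)$. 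Since $\tilde h_y=h_y$ on $f^{-1}(y)$, we have $B^\eta_{q,d,m}(\tilde h_y,y)\subset B_{q,d,m}(h_y)$, so $\dim B^\eta_{q,d,m}(\tilde h_y,y)\leq k$; moreover $B^\eta_{q,d,m}(\tilde h_y,y)$ is closed in $M_{m,d}$ by Lemma 5.1, hence admits an open cover in $M_{m,d}$ of mesh $<\epsilon$ and order $\leq k$. Thus $\tilde h_y\in\mathcal P^\eta_y(q,k,d,\epsilon)$, and Lemma 5.2 applied to $\tilde h_y$ produces a neighborhood of $y$ which, using $\dim Y=0$ together with the fact that $f$ is closed, can be shrunk to a clopen neighborhood $W_y$ of $y$ with $f^{-1}(W_y)\subset U_y$ and $\tilde h_y\in\mathcal P^\eta_{W_y}(q,k,d,\epsilon)$.

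For the global step, any open cover of a zero-dimensional metrizable space admits a refinement by pairwise disjoint clopen sets; choose such a refinement $\{A_\lambda\}_{\lambda}$ of $\{W_y\}_{y\in Y}$ and for each $\lambda$ fix $y_\lambda$ with $A_\lambda\subset W_{y_\lambda}$. The sets $f^{-1}(A_\lambda)$ partition $X$ into disjoint clopen subsets, so $g(x):=\tilde h_{y_\lambda}(x)$ for $x\in f^{-1}(A_\lambda)$ defines a continuous map on $X$. The inequality $\rho_m(g(x),g_0(x))<\alpha(x)$ is immediate on each piece, since $f^{-1}(A_\lambda)\subset U_{y_\lambda}$. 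For any $y\in Y$, choosing $\lambda$ with $y\in A_\lambda\subset W_{y_\lambda}$ gives $B^\eta_{q,d,m}(g,y)=B^\eta_{q,d,m}(\tilde h_{y_\lambda},y)$, which is covered by an open family in $M_{m,d}$ of mesh $<\epsilon$ and order $\leq k$ because $\tilde h_{y_\lambda}\in\mathcal P^\eta_{W_{y_\lambda}}(q,k,d,\epsilon)$. Hence $g\in\mathcal P^\eta_Y(q,k,d,\epsilon)$ as required.

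The main delicate point I expect to spell out carefully is the passage from the dimension bound $\dim B^\eta_{q,d,m}(\tilde h_y,y)\leq k$ to an open cover in the ambient space $M_{m,d}$ of order $\leq k$ and mesh $<\epsilon$; this relies on the normality of $M_{m,d}$, the closedness of $B^\eta_{q,d,m}(\tilde h_y,y)$ in $M_{m,d}$, and the agreement of $\dim$ with covering dimension for metric spaces, but is standard. A secondary, purely bookkeeping issue is ensuring that the clopen refinement $\{A_\lambda\}$ exists at the required level of generality (arbitrary, possibly uncountable index set), for which one invokes the standard characterization of zero-dimensional metrizable spaces; continuity of the glued map $g$ is then automatic since each point of $X$ has the single clopen set $f^{-1}(A_\lambda)$ as a neighborhood.
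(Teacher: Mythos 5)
Your proof is correct, but it follows a genuinely different route from the paper's. The paper does not touch the individual fibers of $f$ directly: it first replaces $g$ by a simplicially factorizable map (via \cite{bv1}) and then, following \cite[Proposition 3.4]{bv2}, factors everything through a diagram $\beta\circ f=p\circ\alpha$ in which $p\colon N\to L$ is a perfect $PL$-map onto a $0$-dimensional complex $L$ with $\dim p\leq n$; thus $N$ splits into a discrete union of finite polyhedra $p^{-1}(z)$ of dimension $\leq n$, the polyhedral Propositions 4.1 and 4.2 are applied on each piece, and the crucial property that $\alpha$ separates $\eta$-distant points is what transfers the bound on $B_{q,1,m}(\varphi_1|p^{-1}(\beta(y)))$ back to $B^\eta_{q,1,m}(h,y)$ --- this is where $\eta$ enters essentially in their argument. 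You instead apply Theorem 1.1 directly to each compact fiber $f^{-1}(y)$, extend by Tietze, use Lemma 5.2 to stabilize the estimate over a neighborhood of $y$, and glue over a disjoint clopen refinement of $Y$ (legitimate since $\dim Y=0$ for metrizable $Y$ yields such refinements, and perfectness of $f$ lets you shrink so that $f^{-1}(W_y)\subset U_y$). Your version is more elementary and self-contained --- it bypasses the factorization machinery of \cite{bv1} and \cite{bv2} entirely and makes $\eta$ irrelevant, since $B^\eta_{q,d,m}(\tilde h_y,y)\subset B_{q,d,m}(h_y)$ trivially --- at the price of invoking the full compactum case of Theorem 1.1 rather than only its polyhedral core. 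The two delicate points you flag (swelling a cover of the closed, at most $k$-dimensional set $B^\eta_{q,d,m}(\tilde h_y,y)$ to an open family in $M_{m,d}$ of the same order and small mesh, and the existence of disjoint clopen refinements) are both standard and are facts the paper itself relies on implicitly (e.g.\ in equality (3) of Corollary 2.2), so they are acceptable at the paper's own level of rigor.
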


\begin{proof}
We are going to show only that $\displaystyle\mathcal
P^\eta_Y(3,3n+1-m,1,\epsilon)$ is dense in $C(X,\mathbb R^m)$ if
$m\geq 2n+1$, the other proofs are similar. Let $g\in C(X,\mathbb
R^m)$ and $\delta\in C(X,(0,1])$. We are going to find
$h\in\displaystyle\mathcal P^\eta_Y(q,3n+1-m,1,\epsilon)$ such that
$\rho_m(g(x),h(x))<\delta(x)$ for all $x\in X$. By \cite[Proposition
4]{bv1}, $g$ can be supposed to be simplicially factorizable. This
means that there exists a simplicial complex $D$ and maps $g_D\colon
X\to D$, $g^D\colon D\to M$ with $g=g^D\circ g_D$. Following the
proof of \cite[Proposition 3.4]{bv2}, we can find an open cover
$\mathcal U$ of $X$, simplicial complexes $N, L$ and maps
$\alpha:X\to N$, $\beta:Y\to L$, $p\colon N\to L$, $\varphi\colon
N\to\mathbb R^m$ and $\delta_1\colon N\to (0,1]$ satisfying the
following conditions, where $h'=\varphi\circ\alpha$:

\begin{itemize}
\item $\alpha$ is an $\mathcal U$-map and for any $x_1,x_2\in X$ with $\rho(x_1,x_2)\geq\eta$ we have
$\alpha(x_1)\neq\alpha(x_2)$;
\item $\beta\circ f=p\circ\alpha$;
\item $p$ is a perfect $PL$-map with $\dim p\leq n$ and $\dim L=0$;
\item $h'$ is $(\delta/2)$-close to $g$;
\item $\delta_1\circ\alpha\leq\delta$.
\end{itemize}
 So, we have the following commutative diagram:

 \begin{picture}(120,95)(-100,0)
\put(30,10){$L$} \put(0,30){$Y$} \put(12,28){\vector(3,-2){18}}
\put(14,14){\small $\beta$} \put(1,70){$X$}
\put(5,66){\vector(0,-1){25}} \put(-1,53){\small $f$}
\put(11,73){\vector(1,0){45}} \put(30,77){\small $h'$}
\put(12,68){\vector(3,-2){18}} \put(15,56){\small $\alpha$}
\put(31,50){$N$} \put(35,46){\vector(0,-1){25}}
 \put(37,33){\small $p$}
\put(46,58){\vector(4,3){13}} \put(44,64){\small $\varphi$}
 \put(60,70){$\mathbb R^m$}
\end{picture}

Since $L$ is a $0$-dimensional simplicial complex and $p$ is a
perfect $PL$-map, $N$ is a discrete union of the finite complexes
$K_z=p^{-1}(z)$, $z\in L$. Because $\dim p\leq n$, $\dim K_z\leq n$,
$z\in L$. Applying Theorem 1.1(a) to each complex $K_z$, we can find
a map $\varphi_1\colon N\to\mathbb R^m$ such that $\displaystyle\dim
B_{q,1,m}(\varphi_1|p^{-1}(z))\leq 3n+1-m$ and $\varphi_1|p^{-1}(z)$
is $\theta_z$-close to $\varphi|p^{-1}(z)$, where
$\theta_z=\min\{\delta_1(u):u\in p^{-1}(z)\}$. Moreover, the map
$h=\varphi_1\circ\alpha$ is $\delta$-close to $g$. We claim that
$h\in\displaystyle\mathcal P^\eta_Y(q,3n+1-m,1,\epsilon)$. Indeed,
let $y\in Y$ and $\Pi^1\in\displaystyle B_{q,1,m}^\eta(h,y)$. Then
there exist $q$ points $x^i\in h^{-1}(\Pi^1)\cap f^{-1}(y)$,
$i=1,..,q$, with $\rho(x^i,x^j)\geq\eta$ for all $i\neq j$.
According to the choice of the cover $\mathcal U$, we have
$\alpha(x^i)\neq\alpha(x^j)$ for $i\neq j$. Since
$\varphi_1^{-1}(\Pi^1)\cap p^{-1}(\beta(y))$ contains the points
$\alpha(x^i)$, $i\leq q$, we obtain that $\displaystyle\Pi^1\in
B_{q,1,m}(\varphi_1|p^{-1}(\beta(y)))$. Thus, we established the
inclusion $\displaystyle B_{q,1,m}^\eta(h,y)\subset
B_{q,1,m}(\varphi_1|p^{-1}(\beta(y)))$ which implies $\displaystyle
\dim B_{q,1,m}^\eta(h,y)\leq 3n+1-m$ for every $y\in Y$.
Consequently, $h\in\displaystyle\mathcal
P^\eta_Y(q,3n+1-m,1,\epsilon)$.
\end{proof}

\textbf{Acknowledgments.} The results from this paper were obtained
during the second author's visit of Department of Computer Science
and Mathematics (COMA), Nipissing University in August 2010. He
acknowledges COMA for the support and hospitality.


\end{document}